\newtheorem{theorem}{Theorem}[section]
\newtheorem{lemma}[theorem]{Lemma}
\newtheorem{proposition}[theorem]{Proposition}
\newtheorem{corollary}[theorem]{Corollary}
\newtheorem{remark}[theorem]{Remark}
\newtheorem*{question}{Question}
\newtheorem*{add}{Addendum}
\numberwithin{equation}{section}
\numberwithin{figure}{section}
\def\loc{\mathrm{loc}}
\def\per{\mathrm{Per}}
\def\uind{\mathrm{u}\text{-}\mathrm{ind}}
\begin{document}

%%%%%%%%%%%%%
\title[Wandering domains for heterodimensional cycles
]
{
Non-trivial wandering domains 
for heterodimensional cycles
}

\author{Shin Kiriki} 
\address{
Department of Mathematics, Tokai University, 4-1-1 Kitakaname, Hiratuka, Kanagawa, 259-1292, 
JAPAN}
\email{kiriki@tokai-u.jp}

\author{Yushi Nakano}
\address{Kitami Institute of Technology, 165 Koen-cho, Kitami, Hokkaido 090-8507, JAPAN}
\email{nakano@mail.kitami-it.ac.jp}

\author{Teruhiko Soma}
\address{Department of Mathematics and Information Sciences,
Tokyo Metropolitan University,
Minami-Ohsawa 1-1, Hachioji, Tokyo 192-0397, JAPAN}
\email{tsoma@tmu.ac.jp}

\subjclass[2010]{
Primary 	37G25; 37C20; 37C29,
Secondary 37C70}
\keywords{heterodimensional cycle,  wandering domain, Hopf bifurcation, Tatjer condition}

\date{\today}
%\date{version 0814}

\dedicatory{
Dedicated to Professor Iku Nemoto on his 70th birthday.
}

%%%%%%%%%%%%%

\maketitle

\begin{abstract}
We present a sufficient condition 
for three-dimensional diffeomorphisms having heterodimensional cycles
to be approximated 
arbitrarily well by diffeomorphisms with non-trivial contracting 
wandering domains via several perturbations. The key idea  is to show 
that diffeomorphisms with heterodimensional cycles associated with 
saddle points with non-real eigenvalues can be approximated by 
diffeomorphisms with generalized homoclinic tangencies presented by 
Tatjer. The generalized homoclinic tangency is an organizing center including a 
Bogdanov-Takens bifurcation, by which one can obtain non-trivial 
contracting wandering domains together with a Denjoy-like construction. 
\end{abstract}

\section{Introduction}\label{s.Introduction}
 \subsection{Main result}
In this paper we study the 
existence of non-trivial wandering domains in nonhyperbolic dynamics.
Here  a \emph{non-trivial wandering domain} for a given map $f$ on a  manifold $M$ means a 
non-empty connected open set $D\subset M$ which satisfies the following conditions:
\begin{itemize}
\item $f^{i}(D)\cap f^{j}(D)= \emptyset$ for every $i, j\geq 0$ with $i\neq j$;
\item the union of the  $\omega$-limit sets of  points in $D$ for $f$, denoted by $\omega(D, f)$, is not equal to a single periodic orbit.
\end{itemize}
See \cite[p.\ 36]{dMvS93} for the original definition in the one-dimensional case. 
A wandering domain $D$ is called \emph{contracting} if the diameter of $f^{n}(D)$  
converges to zero as $n\to \infty$. 
In the early 20th century, 
Bohl \cite{Bo16} and Denjoy \cite{D32} constructed 
examples of 
$C^1$ diffeomorphisms on a circle which have contracting wandering domains in which 
the union of the $\omega$-limit sets of points is a Cantor set.
Following these results, 
similar phenomena  
were observed for high dimensional examples, see \cite{Kn81, Ha89, Mc93, BGLT94, NS96, KM10}. 
On the other hand,  the absence of wandering domains
is the key of classification of one-dimensional  unimodal as well as one-dimensional multimodal maps,  
in real analytic category, which were 
developed in \cite{dMvS89, Ly89,BlLy89, dMvS93, vSV04}, 
see the survey of van\ Strien \cite{vS10}.   
For wandering domains of rational maps of the Riemann sphere, see \cite{Su, MdMvS92}.
Moreover, Berry and Mestel \cite{BM91} showed that 
any $C^{1}$ Lorenz map without gaps does not 
admit a wandering domain, 
but the corresponding assertion for the contracting case with gaps has not been shown yet, see \cite{GC12}.

Topics about the existence of non-trivial wandering domains in nonhyperbolic  dynamics
were first studied by Colli-Vargas \cite{CV01} for some two-dimensional  example
which is made up of an affine thick horseshoe with $C^{2}$-persistent homoclinic tangencies. 
Moreover, their conjecture was recently proved to be true 
by the first and third authors \cite[Theorem A]{KS-ax}: 
any two-dimensional diffeomorphism in any Newhouse open set 
is contained in the $C^{r}$ ($2\leq r<\infty$)  closure 
of diffeomorphisms having contracting non-trivial wandering domains 
for which the union of the $\omega$-limit sets of points contains a basic set.
This result moreover implies an affirmative answer in the $C^{r}$  ($2\leq r<\infty$)  category to one of the open problems 
of van Strien \cite{vS10} which is concerned with the existence of wandering domains for the H\'enon family, see  \cite[Corollary B]{KS-ax}.
Compare with a negative answer in the $C^{\omega}$ category given in \cite{Ou}.

There is another well-studied nonhyperbolic phenomenon different from a homoclinic tangency,  
which is called a heterodimensional cycle. 
We say that  a diffeomorphism  has  a  \emph{heterodimensional cycle} associated with 
saddle periodic points 
if there are saddle periodic points 
$P$ and $Q$ for the diffeomorphism such that
$$  
W^{u}(P)\cap W^{s}(Q)\neq \emptyset,\ 
W^{u}(Q)\cap W^{s}(P)\neq \emptyset,\ 
\uind(P)\neq  \uind(Q),$$
where $\uind(\cdot)$ is the dimension of the unstable bundle, 
called the \emph{unstable index}, for a corresponding periodic point. 
Thus a natural question is the following:
 \begin{question}
Let $f$ be a diffeomorphism having a heterodimensional cycle.
Is the diffeomorphism $f$ contained in the $C^{r}$ closure 
of diffeomorphisms having contracting non-trivial wandering domains?
 \end{question}
The next theorem is one of the main results in the present paper, which gives 
an affirmative answer to the question in the $C^1$-category. 
 \begin{theorem}\label{thm1}
Let $f$ be a diffeomorphism on a three-dimensional manifold which 
has a heterodimensional cycle 
associated with two saddle periodic points at which both the derivatives for $f$ have non-real eigenvalues.
Then there exists a diffeomorphism $g$ arbitrarily $C^{1}$-close to $f$ 
such that $g$ has a contracting non-trivial wandering domain $D$ and 
$\omega(D, g)$ is a nonhyperbolic transitive Cantor set without periodic points. 
 \end{theorem}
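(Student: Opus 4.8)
The plan is to reduce Theorem~\ref{thm1}, through a sequence of arbitrarily $C^{1}$-small perturbations, to the local analysis of a Bogdanov--Takens bifurcation hidden inside a generalized homoclinic tangency of the type introduced by Tatjer, and then to run a Denjoy-like construction near the invariant circle born from that bifurcation. The first stage is to pass from the cycle to such a generalized tangency. Let $P$ and $Q$ be the two saddle periodic points of the heterodimensional cycle, with $\uind(P)\neq\uind(Q)$; say $\uind(P)=2$, so that $Df^{\per(P)}$ has a non-real expanding pair together with a real contracting eigenvalue, while $\uind(Q)=1$ has a non-real contracting pair together with a real expanding eigenvalue. After a first $C^{1}$-perturbation supported in a neighbourhood of the cycle I would linearize (to the available regularity) $f$ near $P$ and near $Q$, fix transition maps along the two heteroclinic orbits, and write down the family of return maps obtained by spiralling $\per(P)\cdot n$ times near $P$, crossing a transition, spiralling $\per(Q)\cdot m$ times near $Q$, and returning. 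Applying the usual affine renormalization, these return maps converge along suitable subsequences $n,m\to\infty$ to an explicit limit family, and the two independent rotation angles supplied by the non-real eigenvalues of $P$ and of $Q$ are exactly the freedom needed to place the relevant invariant curves in the degenerate position that defines Tatjer's condition. One further $C^{1}$-small perturbation then yields a diffeomorphism $g_{0}$, arbitrarily $C^{1}$-close to $f$, having a homoclinic tangency (associated with the continuation of one of the two saddles) of Tatjer's generalized type; by the results recalled in the Introduction, the renormalized return maps near such a tangency realize, up to $C^{1}$-small error, Tatjer's generalized H\'enon family, whose bifurcation set contains a Bogdanov--Takens point as an organizing centre.

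In the second stage I would unfold the generalized tangency of $g_{0}$ in the governing parameters and choose a parameter value near the Bogdanov--Takens point, on the side where the bifurcation produces a normally attracting invariant closed curve $\mathcal{C}$ (the Hopf circle of the Bogdanov--Takens picture), obtaining a diffeomorphism $g_{1}$ still $C^{1}$-close to $f$; normal hyperbolicity makes $\mathcal{C}$ persistent, and $g_{1}|_{\mathcal{C}}$ is a circle diffeomorphism whose rotation number can be moved freely with the parameters. In the third stage, fixing the rotation number at an irrational value and performing a Denjoy--Bohl modification of $g_{1}$ in a thin neighbourhood of $\mathcal{C}$, so as to insert a wandering interval $I\subset\mathcal{C}$, I obtain the final diffeomorphism $g$, still $C^{1}$-close to $f$, whose restriction $g|_{\mathcal{C}}$ is a $C^{1}$ Denjoy circle diffeomorphism with minimal Cantor set $K\subset\mathcal{C}$. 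I would then take $D$ to be a small box around $I$, short in the normal contracting direction: the arcs $g^{i}(I)$ are pairwise disjoint subarcs of $\mathcal{C}$, so $g^{i}(D)\cap g^{j}(D)=\emptyset$ for $i\neq j$; the normal direction contracts uniformly and $\sum_{n}\lvert g^{n}(I)\rvert\leq\lvert\mathcal{C}\rvert$ forces $\lvert g^{n}(I)\rvert\to 0$, so $\operatorname{diam}g^{n}(D)\to 0$; and $\omega(D,g)=K$. Finally $K$ is a Cantor set, it is minimal and hence transitive, it contains no periodic orbit because the rotation number is irrational, and it is nonhyperbolic, because with respect to its unique invariant measure the Lyapunov exponent along $\mathcal{C}$ vanishes (a circle diffeomorphism has non-positive exponent, and it cannot be negative here since $g|_{\mathcal{C}}$ has no periodic points) while the transverse exponent is strictly negative, so $K$ is not a hyperbolic set. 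Thus $D$ is a contracting non-trivial wandering domain with $\omega(D,g)$ as claimed.

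The hard part will be the first stage: steering a $C^{1}$-perturbation of the heterodimensional cycle all the way to a \emph{generalized} homoclinic tangency meeting Tatjer's non-degeneracy conditions, rather than merely to an ordinary quadratic one. This requires precise control of the renormalized return maps --- in particular of how the two spiralling angles interact with the affine rescaling and with the transition maps --- together with a bookkeeping argument showing that the non-real eigenvalues at \emph{both} $P$ and $Q$ supply enough independent coefficients in the limit family to reach Tatjer's degeneracy. By comparison, the second stage is the known Bogdanov--Takens and Tatjer local picture combined with persistence of normally hyperbolic invariant manifolds, and the third stage is the classical Denjoy--Bohl surgery, whose only delicate point here is to perform it by a $C^{1}$-small perturbation that leaves the normal hyperbolicity of $\mathcal{C}$ intact.
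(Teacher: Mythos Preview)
Your second and third stages---invoking Tatjer's bifurcation analysis to obtain a normally attracting invariant circle, and then performing a Denjoy modification along it---match the paper's Steps~3 and~4 closely, and your account of why the resulting Cantor set is transitive, nonhyperbolic, and free of periodic points is correct.

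The gap is in Stage~1. Tatjer's generalized homoclinic tangency (conditions (\ref{C1})--(\ref{C3})) is defined at a saddle $P'$ whose derivative has three \emph{distinct real} eigenvalues $|\lambda_s|<1<|\lambda_{cu}|<|\lambda_u|$: without a real splitting there is no strong unstable foliation $\mathcal{F}^{uu}(P')$ and hence no way to formulate (\ref{C2}). But you place the tangency at ``the continuation of one of the two saddles'' $P$ or $Q$, both of which have a non-real eigenvalue pair by hypothesis, and a $C^1$-small perturbation does not change that. So as written your Stage~1 cannot land in Tatjer's class. The renormalization sketch you give---spiralling near $P$, crossing, spiralling near $Q$, and rescaling---is the mechanism Tatjer uses to analyse the dynamics \emph{after} one already has the degenerate tangency, not a route to produce one; and it does not by itself produce a new periodic point with real spectrum at which to anchor (\ref{C1})--(\ref{C3}).

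The paper's route to the Tatjer tangency is different and handles exactly this obstruction. It first invokes Bonatti--D\'{\i}az to create, by $C^1$-perturbation, \emph{new} saddles $P'$ and $Q'$ with distinct real eigenvalues, homoclinically related to the continuations of $P$ and $Q$ respectively (Lemma~\ref{lem2.2}); the non-real rotation at $Q$ is then used to manufacture an \emph{intrinsic tangency} between $W^u(P)$ and a strong-stable leaf in $W^s(Q')$ (Lemma~\ref{lem:p5c}), which after unfolding the quasi-transverse intersection yields a non-transverse equidimensional cycle between $P$ and $P'$ (class $\mathscr{B}$, Proposition~\ref{prop2.1}). Finally the non-real rotation at $P$ is used, via the Inclination Lemma, to align $W^s(P')$ with a strong-unstable leaf of $\mathcal{F}^{uu}(P')$ and realise the codimension-two Tatjer condition at the real-eigenvalue saddle $P'$ (Proposition~\ref{thm3.1}). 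The two irrational rotations are indeed both essential, as you intuited, but they are spent on geometric alignment arguments (intrinsic tangencies and leaf matching), not on coefficients of a renormalized limit family.
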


Note that, in  \cite{CV01, KS-ax}, to construct non-trivial wandering domains near a 
homoclinic tangency of a 2-dimensional diffeomorphism $F$, they added a 
countable 
series of perturbations to $F$ supported on some open sets which are 
respectively 
contained in mutually disjoint gaps in the complement of persistent 
tangencies. 
On the other hand in this paper, to show Theorem  \ref {thm1} we adopt a quite different procedure as follows:
 \begin{description}
\item[step 1]  $C^1$-approximate
the diffeomorphism $f$  given in Theorem \ref{thm1}
by $C^r$, $r\geq 2$, diffeomorphisms with so called ``non-transverse equidimensional cycles";
\item[step 2] 
$C^{r}$-approximate 
the diffeomorphisms with the non-transverse equidimensional cycles 
by other diffeomorphisms having so called ``generalized homoclinic tangencies";
\item[step 3]  
Owing to one of Tatjer's results  \cite{Tj01}, the generalized homoclinic tangencies lead to Bogdanov-Takens bifurcations, which create invariant circles;
\item[step 4]  
Finally, by using a $C^1$ Denjoy-like construction along the invariant circle, 
one can obtain a diffeomorphism $g$ satisfying the conditions as in Theorem  \ref {thm1}.
\end{description} 
 In the next subsection, we give essential ingredients to carry out these steps.
 
 \subsection{Outline of the paper}
Denote by $\mathscr{A}$ the set of all diffeomorphisms which satisfy the assumption of Theorem \ref{thm1}.
Furthermore denote by $\mathscr{Z}$  the set of all 
 diffeomorphisms which satisfy the conclusion of Theorem \ref{thm1}.
Then, Theorem \ref{thm1} can be rephrased as follows:
\begin{corollary}\label{co1}
$\mathscr{A}$ is contained in the $C^{1}$ closure of $\mathscr{Z}$.
\end{corollary}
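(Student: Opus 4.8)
The plan is to prove Theorem~\ref{thm1}, of which Corollary~\ref{co1} is only a reformulation: it suffices to realize, $C^1$-close to an arbitrary $f\in\mathscr{A}$, a diffeomorphism $g\in\mathscr{Z}$. I would follow the four-step scheme announced above, whose philosophy is to trade the given heterodimensional cycle for a highly degenerate but rigid local model in whose unfolding a Denjoy-type surgery can be performed. Fix the two saddles $P,Q$ of the cycle; replacing $f$ by an iterate we may take them to be fixed, say with $\uind(P)=1$ and $\uind(Q)=2$, and with $Df$ having a pair of non-real eigenvalues at each. In \textbf{step~1}, using linearizing coordinates near $P$, $Q$ and along the two heteroclinic connections (only $C^1$-linearization is available, which contributes to the final restriction to $C^1$), I would make an arbitrarily small $C^1$ perturbation supported near the cycle so that the return maps become $C^r$ (indeed polynomial) model maps; the rotation carried by the non-real eigenvalues is what lets the two one-dimensional invariant manifolds be folded onto each other, producing a nearby diffeomorphism with a \emph{non-transverse equidimensional cycle}. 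In \textbf{step~2} I would unfold this cycle and check, jet by jet on the model return map, that the resulting family satisfies the algebraic conditions defining Tatjer's \emph{generalized homoclinic tangency}; these perturbations are $C^r$ with $r\ge 2$.

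For \textbf{step~3} I would invoke Tatjer's theorem \cite{Tj01}: the generalized homoclinic tangency is an organizing centre whose unfolding contains a Bogdanov--Takens (Andronov--Hopf) bifurcation of a fixed point of a suitably renormalized return map $R_\mu$. From the Hopf branch I would extract, for parameters on one side of the bifurcation, a $C^r$ invariant circle $\mathcal{C}_\mu$ for $R_\mu$ which, by the normal-form computation underlying the Bogdanov--Takens unfolding, is normally hyperbolic and normally \emph{contracting}; thus $R_\mu|_{\mathcal{C}_\mu}$ is a genuine $C^r$ circle diffeomorphism depending continuously on $\mu$, and tuning $\mu$ I would arrange its rotation number to equal a prescribed irrational $\rho$. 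Since $\mathcal{C}_\mu$ is born arbitrarily close to the original cycle and is normally attracting, the ambient diffeomorphism carrying it is still $C^1$-close to $f$.

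In \textbf{step~4} I would perform, on a fixed normally contracting tubular neighbourhood of $\mathcal{C}_\mu$, the classical $C^1$ Denjoy construction along the circle: blow up one orbit of the rotation by $\rho$ into a summable family of intervals and spread the dynamics across the gaps, replacing $\mathcal{C}_\mu$ by a $C^1$-nearby invariant circle whose restricted dynamics is a Denjoy counterexample with minimal Cantor set $K$ and wandering interval $I$. Thickening $I$ by a small transverse disc gives a non-empty connected open set $D$; its forward iterates are pairwise disjoint because $I$ is wandering, and $\operatorname{diam} g^n(D)\to 0$ by the uniform normal contraction, so $D$ is a contracting non-trivial wandering domain with $\omega(D,g)=K$. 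The set $K$ is transitive (it is the minimal set of a circle dynamics), has no periodic point (the rotation number is irrational), and is nonhyperbolic for $g$ because the circle direction carries zero Lyapunov exponent; hence $g\in\mathscr{Z}$ lies arbitrarily $C^1$-close to $f$.

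The main obstacle is step~4 performed inside the narrow $C^1$ window left by the earlier reductions: the Denjoy surgery is intrinsically only $C^1$, it must be inserted near $\mathcal{C}_\mu$ without destroying its normal hyperbolicity, and it must remain $C^1$-small relative to the already perturbed diffeomorphism of steps~1--3 — which is itself only $C^1$-close to $f$. One must also ensure that the surgery yields a proper Cantor set rather than the whole circle and preserves irrationality of the rotation number, and reconcile the scale of the surgery with the renormalization scales inherited from Tatjer's construction; checking these compatibilities is the technical heart of the argument.
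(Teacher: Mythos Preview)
Your outline follows the same four-step architecture as the paper, but Step~1 has a genuine gap that blocks Steps~2--3. The Tatjer condition (\ref{C1})--(\ref{C3}) is formulated at a saddle $P'$ with three \emph{distinct real} eigenvalues $|\lambda_s|<1<|\lambda_{cu}|<|\lambda_u|$: without real eigenvalues there is no strong-unstable foliation $\mathcal{F}^{uu}(P')$ and no central-stable plane $S(\bar{\boldsymbol{x}}_0)$, so conditions (\ref{C2}) and (\ref{C3}) are not even defined. By hypothesis your saddles $P,Q$ both carry a non-real pair, so neither can serve as the Tatjer saddle. The paper's Step~1 therefore begins not by linearizing the given cycle but by invoking Bonatti--D\'iaz \cite[Theorem~2.1]{BD08} (Lemma~\ref{lem2.2} here) to produce, $C^1$-near $f$, \emph{new} periodic saddles $P'_{f_1},Q'_{f_1}$ with distinct real eigenvalues, homoclinically related to the continuations $P_{f_1},Q_{f_1}$. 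The non-transverse equidimensional cycle of class~$\mathscr{B}$ is then built between $P_{f_1}$ (which still has its non-real pair) and $P'_{f_1}$; the eventual Tatjer tangency lives at $P'$. Your outline contains no mechanism to manufacture such a real-eigenvalue saddle, so the ``jet by jet'' verification in your Step~2 has no object to act on.

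Two further points. First, the role of the non-real eigenvalues is more specific than ``fold the two one-dimensional invariant manifolds onto each other'': the irrational rotation in $W^s_{\loc}(Q)$ is used (Lemma~\ref{lem:p5c}) to make $W^u(P)$ nearly tangent to a strong-stable leaf of $Q'$ (an \emph{intrinsic tangency}), and then the irrational rotation in $W^u_{\loc}(P)$ is used (Proposition~\ref{thm3.1}) to align $W^s(P')$ with a strong-unstable leaf of $P'$, realizing the codimension-two condition~(\ref{C2}). This geometric alignment is the technical heart of the paper, not a jet computation. Second, you misplace the main obstacle: Step~4 is essentially routine once one has the Hopf normal form~(\ref{nf}) with its attracting circle and third contracting coordinate---the paper dispatches it in half a page---whereas the real work is in Steps~1--2.
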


To show Theorem \ref{thm1}, we have to prepare auxiliary 
classes $\mathscr{B}$, $\mathscr{C}$, $\mathscr{D}$
of  diffeomorphisms on $M$ satisfying the following inclusion relations:  
$$
\mathscr{A}\subset 
\overline{(\mathscr{B})}_{C^{1}}, \quad 
\mathscr{B}\subset \overline{(\mathscr{C})}_{C^{r}}\subset 
\overline{(\mathscr{D})}_{C^{r}}, \quad 
\mathscr{D}\subset \overline{(\mathscr{Z})}_{C^{1}}, 
$$
where $\overline{( \cdot )}_{C^{1}}$ and $\overline{( \cdot )}_{C^{r}}$
respectively stand for the $C^{1}$ and $C^{r}$, $r\geq 2$,  
closures of the corresponding sets. 
We will give the definition of each class in the following sections. 
So, we here briefly explain  
what role each class plays in the proof of Theorem \ref{thm1}.

Section \ref{sec2} contains  step 1 
where we show that 
any element of $\mathscr{A}$ leads to a heterodimensional cycle containing an 
``intrinsic tangency''. It is 
a non-transverse intersection between some invariant manifold and some leaf of an invariant foliation
which is contained in some transverse heterodimensional intersections, see
Lemma \ref{lem:p5c}. Moreover, 
the intrinsic tangency yields the class $\mathscr{B}$ of $C^r$ diffeomorphsims 
for which each of elements has a
``non-transverse equidimensional cycle'',  see Proposition \ref{prop2.1}.

Section \ref{sec3} corresponds to steps 2 and 3. 
We here show  that $\mathscr{B}$ is contained in the $C^{r}$ closure of the class $\mathscr{C}$ of $C^r$ diffeomorphsims
where every element has 
a ``generalized homoclinic tangency'' presented by Tatjer \cite{Tj01}. See Proposition \ref{thm3.1}.
In short, the generalized homoclinic tangency is a certain type of
non-transverse codimension-two intersection which satisfies
the Tatjer condition. 
Here the Tatjer condition consists of the geometric properties given in (\ref{C1})--(\ref{C3}) 
of section \ref{sec3}.
As a matter of fact, 
a couple of propositions \ref{prop2.1} and \ref{thm3.1} 
is the key of this paper which directly implies another main result: 
\begin{theorem}
Every element of $\mathscr{A}$ can be arbitrarily $C^{1}$-approximated by $C^r$ diffeomorphisms 
having 
a generalized homoclinic tangency.
\end{theorem}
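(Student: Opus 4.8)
The plan is to derive this last theorem as an immediate formal consequence of the two key propositions quoted in the excerpt, namely Proposition~\ref{prop2.1} and Proposition~\ref{thm3.1}. First I would recall the chain of inclusions set up in the outline: by Proposition~\ref{prop2.1} one has $\mathscr{A}\subset\overline{(\mathscr{B})}_{C^{1}}$, where $\mathscr{B}$ is the class of $C^{r}$ diffeomorphisms possessing a non-transverse equidimensional cycle coming from an intrinsic tangency; and by Proposition~\ref{thm3.1} one has $\mathscr{B}\subset\overline{(\mathscr{C})}_{C^{r}}$, where $\mathscr{C}$ is the class of $C^{r}$ diffeomorphisms exhibiting a generalized homoclinic tangency in Tatjer's sense. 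Since $r\ge 2$, the $C^{r}$ topology is finer than the $C^{1}$ topology, so $\overline{(\mathscr{C})}_{C^{r}}\subset\overline{(\mathscr{C})}_{C^{1}}$.

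Next I would assemble these facts. Given $f\in\mathscr{A}$ and any $\varepsilon>0$, Proposition~\ref{prop2.1} supplies a diffeomorphism $f_{1}\in\mathscr{B}$ with $d_{C^{1}}(f,f_{1})<\varepsilon/2$. Applying Proposition~\ref{thm3.1} to $f_{1}\in\mathscr{B}$, and using that $C^{r}$-closeness implies $C^{1}$-closeness, we obtain $g\in\mathscr{C}$ with $d_{C^{1}}(f_{1},g)<\varepsilon/2$; in particular $g$ is a $C^{r}$ diffeomorphism having a generalized homoclinic tangency. By the triangle inequality $d_{C^{1}}(f,g)<\varepsilon$, which is exactly the assertion that every element of $\mathscr{A}$ can be arbitrarily $C^{1}$-approximated by $C^{r}$ diffeomorphisms with a generalized homoclinic tangency. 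Equivalently, $\mathscr{A}\subset\overline{(\mathscr{C})}_{C^{1}}$.

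There is essentially no obstacle here at the level of the statement itself: the theorem is a bookkeeping corollary, and the only thing to be careful about is the interplay of topologies — one must note explicitly that the intermediate approximation of Proposition~\ref{prop2.1} is genuinely only $C^{1}$ (the perturbation unfolding the heterodimensional cycle into a non-transverse equidimensional cycle need not be $C^{r}$-small), whereas the subsequent step of Proposition~\ref{thm3.1} is $C^{r}$-small, so the composite approximation is only guaranteed to be $C^{1}$-small, which is precisely what is claimed. All the genuine work — constructing the intrinsic tangency inside the transverse heterodimensional intersections, turning it into a non-transverse equidimensional cycle, and then verifying the geometric conditions (\ref{C1})--(\ref{C3}) of the Tatjer condition — is contained in the proofs of Propositions~\ref{prop2.1} and~\ref{thm3.1}, which we are entitled to invoke. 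Hence the main difficulty in the paper lies earlier; this theorem records the payoff.
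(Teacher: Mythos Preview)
Your proposal is correct and matches the paper's own treatment: the paper does not give a separate proof of this theorem but simply states that Propositions~\ref{prop2.1} and~\ref{thm3.1} together ``directly imply'' it, which is exactly the chain $\mathscr{A}\subset\overline{(\mathscr{B})}_{C^{1}}$ and $\mathscr{B}\subset\overline{(\mathscr{C})}_{C^{r}}\subset\overline{(\mathscr{C})}_{C^{1}}$ you spell out. Your added remarks on the interplay of topologies are accurate and, if anything, more explicit than the paper itself.
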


Note that Tatjer 
presents several types of generalized homoclinic tangencies,
which provide various phenomena according to the types, e.g.,  several types of limit return (H\'enon-like) maps of renormalizations, 
birth of attracting or saddle type invariant circles via Bogdanov-Takens bifurcations, 
the existence of strange attractors and the Newhouse phenomenon. See \cite[Theorem 1]{Tj01}.
Indeed, by virtue of one of them, 
we can find the class $\mathscr{D}$ of diffeomorphisms which have attracting invariant circles 
created by the Bogdanov-Takens bifurcation. 
 
Section  \ref{sec4} contains step 4 where we finally perform a Denjoy-like construction for a normal tubular 
neighborhood of the attracting invariant circle of any diffeomorphism in $\mathscr{D}$ 
to detect non-trivial wandering domains.  
\smallskip

In closing, we note that all approximations in this paper can be 
done in $C^r$-category with any integer $r \geq 2$, except for Lemma 2.2 
in step 1 and Proposition 4.2 in step 4.

\section{Intrinsic tangencies of cycles}\label{sec2}
Let $M$ be a three-dimensional Riemannian manifold and 
$N_{1}$ and $N_{2}$ be submanifolds of $M$.
We say that 
a point $\boldsymbol{x}\in N_{1}\cap N_{2}$ is a \emph{transverse intersection}
if it satisfies $T_{\boldsymbol{x}}M=T_{\boldsymbol{x}}N_{1} + T_{\boldsymbol{x}}N_{2}$.
Denote by $N_{1} \pitchfork N_{2}$ the set of all transverse intersections of $N_{1}$ and $N_{2}$.
On the other hand, 
a point $\boldsymbol{y}\in N_{1}\cap N_{2}$
satisfying $T_{\boldsymbol{y}}M\neq T_{\boldsymbol{y}}N_{1} + T_{\boldsymbol{y}}N_{2}$ 
is called a  \emph{tangency}.
In this section we consider the set $\mathscr{B}$ 
of  all $C^{r}$, $r\geq2$,  diffeomorphisms on $M$ for which any element 
has a \emph{non-transverse equidimensional cycle}, that is, for any $f\in\mathscr{B}$, 
there are saddle periodic points $P$ and $P^{\prime}$ for $f$  
satisfying the following conditions:
\begin{enumerate}
\renewcommand{\theenumi}{B\arabic{enumi}}
\renewcommand{\labelenumi}{(\theenumi)}
\item \label{B1}
$\uind(P)=\uind(P^{\prime})= 2$, and   the
unstable eigenvalues of $P$ are non-real, while the unstable eigenvalues of $P^{\prime}$ are real;
\item \label{B2}
 $P$ and $P^{\prime}$
are \emph{homoclinically related} to each other, i.e., $W^{s}(P^{\prime})\pitchfork W^{u}(P)\neq \emptyset$ and 
$W^{u}(P^{\prime})\pitchfork W^{s}(P)\neq \emptyset$;
\item \label{B3}
there is a  quadratic tangency between 
$W^{s}(P^{\prime})$ and $ W^{u}(P)$.
\end{enumerate}
Here the tangency $\boldsymbol{y}\in W^{s}(P^{\prime})\cap W^{u}(P)$ is said to be \emph{quadratic} (or a \emph{contact of order} $1$)  if there exist
an arc  $\ell\subset W^{s}(P^{\prime})$, a regular surface $S\subset W^{u}(P)$, and some $C^2$ change of coordinates on an open neighborhood $U(\boldsymbol{y})$ of $\boldsymbol{y}$ 
such that 
(i) $\boldsymbol{y}= (0,0,0)$, $S=\{(x,y,z)\in U(\boldsymbol{y})\,;\, z = 0\}$; 
(ii) $\ell $ has a regular parametrization $\ell(t)=(x(t),y(t),z(t))$ with $\ell(0)=(0,0,0)$; 
(iii) $z^\prime(0)=0$ and $z^{\prime\prime}(0)\neq0$.

The main result of this section is the following proposition.
\begin{proposition}\label{prop2.1}
$\mathscr{A}$ is contained in the $C^{1}$ closure of $\mathscr{B}$.
\end{proposition}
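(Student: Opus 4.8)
The plan is to start from an arbitrary $f\in\mathscr{A}$, i.e.\ a diffeomorphism with a heterodimensional cycle associated with saddle periodic points $P$ and $Q$ carrying non-real eigenvalues, and to produce, by an arbitrarily small $C^1$ perturbation, a diffeomorphism satisfying (\ref{B1})--(\ref{B3}). The first step is to normalize the cycle: by replacing $f$ with an iterate and passing to local coordinates given by linearization (Sternberg/Hartman in the $C^1$ setting, which is the reason this step is only $C^1$), put $P$ at the origin with a rotation-type unstable action in a $2$-plane and $Q$ with a saddle action. The heterodimensional condition $\uind(P)\neq\uind(Q)$ forces, say, $\uind(Q)=\uind(P)+1=2$, so $W^u(Q)$ and $W^s(P)$ have complementary dimensions in $M$ and their intersection can be made transverse by a small perturbation, while $W^u(P)$ (a surface) and $W^s(Q)$ (a surface) meet in a $1$-dimensional or $0$-dimensional set along the cycle.

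Next I would locate the \emph{intrinsic tangency} promised by Lemma~\ref{lem:p5c}: inside the transverse heterodimensional intersection $W^u(Q)\pitchfork W^s(P)$ one uses the strong-unstable foliation of $W^u(Q)$ (or the strong-stable foliation of $W^s(P)$), and the claim is that after an arbitrarily small $C^1$ perturbation the surface $W^u(P)$ meets one leaf of this invariant foliation non-transversely, in a quadratic fashion. The role of the non-real (rotating) eigenvalues at $P$ is exactly to guarantee that the relevant branch of $W^u(P)$ spirals around and necessarily develops such a tangency with the foliation leaf; this is the geometric heart of the argument. Having the intrinsic tangency, one then uses the standard blender/flip-flop machinery near a heterodimensional cycle to create, by a further small $C^1$ perturbation, a new saddle $P'$ of unstable index $2$ with \emph{real} unstable eigenvalues that is homoclinically related to $P$ (activating both branches of the cycle to produce $W^s(P')\pitchfork W^u(P)\neq\emptyset$ and $W^u(P')\pitchfork W^s(P)\neq\emptyset$, which is (\ref{B2})), and — crucially — so that the intrinsic tangency of $W^u(P)$ with the foliation leaf is inherited as an honest quadratic tangency between $W^s(P')$ and $W^u(P)$, giving (\ref{B1}) and (\ref{B3}). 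Concretely $P'$ can be taken to be a periodic point lying on (a transverse piece of) $W^u(Q)\cap W^s(P)$ with period a large multiple of that of the cycle.

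Finally I would check that the perturbations can all be concentrated in arbitrarily small neighborhoods of the finitely many pieces of the cycle, so the total $C^1$ distance to $f$ is as small as desired, and that the resulting diffeomorphism genuinely lies in $\mathscr{B}$: $P$ still has non-real unstable eigenvalues (this is an open condition, preserved under small perturbation), $P'$ has real unstable eigenvalues (arranged by construction), both have $\uind=2$, they are homoclinically related, and the tangency between $W^s(P')$ and $W^u(P)$ is quadratic in the sense of (i)--(iii), i.e.\ with respect to the strong-stable foliation coordinates on $W^s(P')$ the surface $W^u(P)$ has a contact of order exactly $1$. The main obstacle I anticipate is the genericity/transversality argument that the intrinsic tangency produced by the spiraling of $W^u(P)$ is \emph{quadratic} rather than of higher order, and that one can simultaneously keep all transverse intersections of the cycle intact while bending $W^u(P)$ into the desired quadratic contact — this is precisely the content of Lemma~\ref{lem:p5c}, on which the rest of the proof of Proposition~\ref{prop2.1} rests.
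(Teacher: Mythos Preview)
Your outline has the right flavour but the order of operations is wrong in a way that makes the argument break. You propose to locate the intrinsic tangency \emph{first}, using ``the strong-unstable foliation of $W^u(Q)$ (or the strong-stable foliation of $W^s(P)$)'', and only \emph{afterwards} to manufacture the auxiliary saddle $P'$ with real eigenvalues. But by hypothesis both $P$ and $Q$ carry non-real eigenvalues: with your index convention $\uind(P)=1$, $\uind(Q)=2$, the two contracting eigenvalues at $P$ form a complex-conjugate pair of equal modulus, so $W^s(P)$ admits no one-dimensional strong-stable subfoliation; likewise the two expanding eigenvalues at $Q$ are complex conjugates, so $W^u(Q)$ admits no strong-unstable subfoliation. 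There is thus no invariant foliation to be tangent to, and Lemma~\ref{lem:p5c} cannot even be formulated for the original pair $(P,Q)$. (There is also an index slip: with $\uind(P)=1$ your $W^u(P)$ is a curve, not a surface.)

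The paper reverses your two steps. It first invokes Bonatti--D\'iaz \cite{BD08} (Lemma~\ref{lem2.2}) to obtain, by a $C^1$-small perturbation, \emph{two} auxiliary saddles $P'$ and $Q'$ with distinct real eigenvalues, homoclinically related to the continuations of $P$ and $Q$ respectively and themselves forming a simple heterodimensional cycle. Only then is the strong-stable foliation $\mathcal{F}^{ss}(Q')$ available, and Lemma~\ref{lem:p5c} produces the intrinsic tangency between $W^u(P)$ and a leaf of $\mathcal{F}^{ss}(Q')$; the irrational rotation used to align the arc with the leaf is the one acting on $W^s_{\loc}(Q)$ (the non-real \emph{contracting} pair at $Q$), not the unstable rotation at $P$ as you suggest. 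Finally, the conversion of this intrinsic tangency into the quadratic tangency (\ref{B3}) between $W^s(P')$ and $W^u(P)$ is not an automatic ``inheritance'': the paper takes a segment $L^s\subset W^s(P')$ through the quasi-transverse point $X'\in W^u(Q')\cap W^s(P')$, pushes it backward so that by the Inclination Lemma it $C^1$-accumulates on $W^{ss}_{\loc}(Q')$, and then generically unfolds the quasi-transverse intersection to slide this segment onto the intrinsic-tangency point, creating the desired quadratic contact. Your proposal omits both the creation of $Q'$ and this entire sliding mechanism.
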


Since our setting is in dimension three,
the following lemma can be 
immediately obtained from Bonatti-D\'{\i}az' result \cite[Theorem 2.1]{BD08}.
%We hereafter suppose $\uind(P)=\uind(Q) +1$ for saddle periodic points $P$ and $Q$ of $f$.
%To show it,
%we first recall the next key result presented by Bonatti-D\'{\i}az.
%A \emph{co-index 1 cycle}  means a heterodimensional cycle 
%with saddle periodic points $P$ and $Q$ whose indices 
%satisfy the co-index 1 condition, i.e.\ $\uind(P)=\uind(Q)\pm1$.  

\begin{lemma}\label{lem2.2}
Let $f$ be any element of $\mathscr{A}$ 
which has a heterodimensional cycle
associated  with $P$ and $Q$ at
which both the derivatives have non-real eigenvalues.
Then every $C^{1}$ neighborhood of $f$ contains a diffeomorphism $f_1$ with a 
heterodimensional cycle 
having real central eigenvalues. 
Moreover, this cycle for $f_1$ can be taken associated with saddles $P^{\prime}_{f_1}$ and $Q^{\prime}_{f_1}$ which are
homoclinically related to the continuations $P_{f_1}$ of $P$ and $Q_{f_1}$ of $Q$, respectively.
\hfill$\square$
\end{lemma}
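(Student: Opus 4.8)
The plan is to reduce the statement to the hypothesis of Bonatti-D\'{\i}az' theorem \cite[Theorem 2.1]{BD08} and then invoke it. First I would record that the heterodimensional cycle carried by $f\in\mathscr{A}$ has coindex one: since $M$ is three-dimensional and $P,Q$ are saddles, $\uind(P),\uind(Q)\in\{1,2\}$, and the defining inequality $\uind(P)\neq\uind(Q)$ forces $\{\uind(P),\uind(Q)\}=\{1,2\}$. After relabelling I may assume $\uind(P)=2$ and $\uind(Q)=1$; then at each saddle one strong (real) direction splits off and the cycle is organized by a single central direction. The ``central eigenvalue'' at $P$ is the weakest unstable eigenvalue of the return derivative at $P$, which for $f\in\mathscr{A}$ is a priori a non-real conjugate pair, and the ``central eigenvalue'' at $Q$ is the weakest stable eigenvalue of the return derivative at $Q$, again a priori a non-real conjugate pair.

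With this identification, the cycle associated with $P$ and $Q$ is precisely a coindex-one heterodimensional cycle in the sense of \cite{BD08}, so \cite[Theorem 2.1]{BD08} yields, arbitrarily $C^{1}$-close to $f$, a diffeomorphism $f_1$ with a heterodimensional cycle whose central eigenvalues are real, realized by saddles $P'_{f_1}$ and $Q'_{f_1}$ homoclinically related to the continuations $P_{f_1}$ and $Q_{f_1}$, which is exactly the asserted conclusion. The only point to verify is that the standard notion of coindex-one cycle used in \cite{BD08} imposes nothing on $f$ beyond what $\mathscr{A}$ already provides: the branch $W^u(P)\cap W^s(Q)$ is automatically a transverse intersection, since $\dim W^u(P)+\dim W^s(Q)=4>3$, while the fragile branch $W^u(Q)\cap W^s(P)$ can be made quasi-transverse by an arbitrarily $C^{1}$-small preliminary perturbation (it has negative expected dimension, so after perturbation its nonempty occurrences are quasi-transverse), which is the generic configuration assumed in \cite{BD08}.

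For completeness I would indicate why the result is not a mere one-point local perturbation: one cannot simply split the conjugate pair of central eigenvalues at $P$ (resp.\ $Q$) by a Franks-type perturbation, since this generically destroys the quasi-transverse branch $W^u(Q)\cap W^s(P)$, which is not robust. The mechanism of \cite{BD08} works inside the recurrence generated by the cycle: one produces periodic orbits that shadow the cycle for a long time, so that their return derivatives are, up to a controlled error, products of the transition maps along the cycle with high powers of the return derivatives at $P$ and $Q$; performs the eigenvalue adjustment along these auxiliary orbits, where there is enough room to keep the perturbation $C^{1}$-small; and then re-establishes the heterodimensional connection using the robust transverse branch $W^u(P)\pitchfork W^s(Q)$. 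The saddles $P'_{f_1}$ and $Q'_{f_1}$ of the conclusion are these auxiliary orbits, and since their orbits shadow and transversally meet the invariant manifolds of $P_{f_1}$ and $Q_{f_1}$ they are homoclinically related to them by construction.

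The main obstacle, were one to reprove \cite[Theorem 2.1]{BD08} here rather than cite it, is precisely the control of this last step: arranging a single $C^{1}$-small perturbation that simultaneously turns the central eigenvalue real, preserves the transverse branch of the cycle, and recreates the quasi-transverse branch between the new saddles. Since \cite{BD08} carries this out in full generality for coindex-one cycles, in the present three-dimensional setting it is enough to invoke it, and the lemma follows.
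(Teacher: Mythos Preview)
Your proposal is correct and follows exactly the paper's approach: the paper does not give a proof of this lemma at all but simply records (in the sentence preceding the statement and via the terminal $\square$) that in dimension three the result is immediate from \cite[Theorem~2.1]{BD08}. Your additional verification that the cycle is coindex one, that the robust branch $W^{u}(P)\cap W^{s}(Q)$ is automatically transverse, and that the fragile branch can be made quasi-transverse by a preliminary perturbation, is a reasonable expansion of why \cite{BD08} applies; the only imprecision is speaking of the ``weakest unstable eigenvalue'' at $P$ when the unstable pair is complex conjugate (hence of equal modulus), but this is harmless since you immediately note that this pair is non-real, which is precisely the obstruction to having real central eigenvalues that \cite{BD08} removes.
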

\noindent
Here the heterodimensional cycle for $f_1
$ is said to have \emph{real central eigenvalues} if
there are 
an expanding real eigenvalue  of $Df_1^{\per(f_1)}(P_{f_1})$ and 
a contracting real eigenvalue  of $Df_1^{\per(f_1)}(Q_{f_1})$ whose multiplicities are equal to $1$.
Note that this lemma implies that $\uind(P)=\uind(P_{f_1})$ and $\uind(Q)=\uind(Q_{f_1})$. 
See  Figure \ref{fg1}
for the configuration of each ingredient in Lemma \ref{lem2.2}.

\begin{remark}\label{rmk2.3}
\normalfont 
The 
heterodimensional cycle 
for $f_1$ in Lemma \ref{lem2.2} 
is \emph{simple} in the sense 
that the local dynamics in small neighborhoods of $P_{f_1}$ and of $Q_{f_1}$ are linear 
while the transitions between the neighborhoods are affine and preserve a partially hyperbolic splitting.
See  \cite[Definition 3.4]{BD08} for the precise description.
Moreover the cycle 
contains a transverse intersection associated with the saddles $P^{\prime}_{f_1}$ and $Q^{\prime}_{f_1}$, see \cite[\S 5]{BD08}.
\end{remark}

We here introduce a concept of intrinsic tangencies. 
Let $f$ be a diffeomorphism on a three-dimensional manifold $M$
having saddle periodic points $P$ and $Q$ with 
$\uind(P)=\uind(Q) +1$.
Suppose that all eigenvalues of 
the derivative $Df^{\per(Q)}(Q)$ are real. 
We say that $W^{u}(P)$ and $W^{s}(Q)$ have an \emph{intrinsic tangency} if 
there is a leaf $\ell^{ss}$ of the $C^{1}$ strong stable foliation $\mathcal{F}^{ss}(Q)$ in $W^{s}(Q)$ 
such that $\ell^{ss}$ and $W^{u}(P)$  have a tangency. 
See Figure \ref{fg0}-(a).
Note that the intrinsic tangency is not necessarily contained in a 
heterodimensional tangency between $W^{u}(P)$ and $W^{s}(Q)$ 
as shown in Figure \ref{fg0}-(b). See  \cite{DR92, KS12} for its precise definition.
Indeed, it is not difficult to give examples where intrinsic tangencies are contained in 
$W^{u}(P)\pitchfork W^{s}(Q)$,  \emph{e.g.},   
circular transverse heterodimensional intersections in \cite{DR92, KS12} contain at least two such intrinsic tangencies.
 %%%%%%%%%%%%%%%%%%%%%%%%%%%%%%%%%%%%%%%
\begin{figure}[hbt] 
\centering
\scalebox{0.8}{\includegraphics[clip]{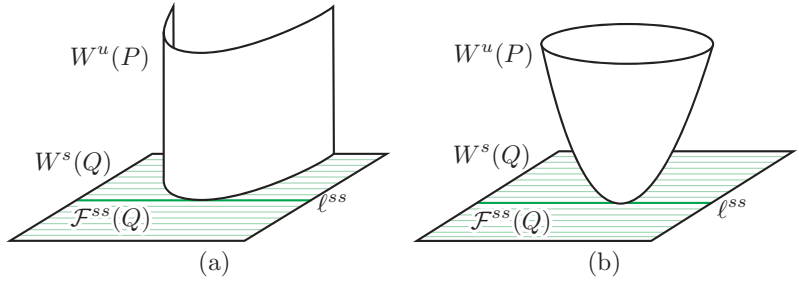}}
\caption{Intrinsic tangencies in transverse intersection  and heterodimensional tangency}
\label{fg0}
\end{figure}
%%%%%%%%%%%%%%%%%%%%%%%%%%%%%%%%%%%%%%%

Let $f$ be any element of $\mathscr{A}$ 
which has a heterodimensional cycle
associated with saddle periodic points $P$ and $Q$ at
which both the derivatives have non-real eigenvalues.
One may suppose that 
$\uind(P)>  \uind(Q)$, 
$W^{u}(P)\pitchfork W^{s}(Q)\neq \emptyset$ and $W^{u}(Q)\cap W^{s}(P)$ 
contains a quasi-transverse intersection.
By Lemma \ref{lem2.2} and Remark \ref{rmk2.3},  one obtains
 a $C^{1}$ diffeomorphism $f_{1}$ arbitrarily $C^{1}$-close to $f$ which satisfies the following properties: 
\begin{enumerate}
 \renewcommand{\theenumi}{A\arabic{enumi}}
\renewcommand{\labelenumi}{(\theenumi)}
\item\label{1f1}  
$f_{1}$ has saddle periodic points $P^{\prime}_{f_{1}}$ and $Q^{\prime}_{f_{1}}$ with the following conditions:
 \begin{enumerate}[(i)]
\item  
the eigenvalues of the derivatives of $f_{1}$ at $P^{\prime}_{f_{1}}$ and $Q^{\prime}_{f_{1}}$ 
are distinct real numbers;
\item $P^{\prime}_{f_{1}}$ is homoclinically related to the continuation $P_{f_{1}}$ of $P$, while 
$Q^{\prime}_{f_{1}}$ is homoclinically related to the continuation $Q_{f_{1}}$ of $Q$
%, i.e., 
%$W^u(P_{f_1}) \pitchfork W^s(P'_{f_1}) \neq \emptyset$, $W^s(P_{f_1}) \pitchfork W^u(P'_{f_1}) \neq \emptyset$, 
%while $W^u(Q_{f_1}) \pitchfork W^s(Q'_{f_1}) \neq \emptyset$, $W^s(Q_{f_1}) \pitchfork W^u(Q'_{f_1}) \neq \emptyset$
;
\item
 $W^u(P_{f_1}) \cap W^s(Q_{f_1})$ contains a  transverse intersection;
\end{enumerate}
\item\label{2f1}  
$f_{1}$ has a  heterodimensional cycle associated with  $P^{\prime}_{f_{1}}$ and $Q^{\prime}_{f_{1}}$,  
 i.e.,  
 \begin{enumerate}[(i)]
\item $W^{u}(P^{\prime}_{f_{1}})\cap W^{s}(Q^{\prime}_{f_{1}})$ contains 
a  transverse intersection; 
\item
 $W^{s}(P^{\prime}_{f_{1}})\cap W^{u}(Q^{\prime}_{f_{1}})$ 
contains a quasi-transverse intersection $X^{\prime}$.
\end{enumerate}
\end{enumerate}
Here $X^{\prime}$ is called a \emph{quasi-transverse intersection} 
if it satisfies 
$T_{X^{\prime}}W^{s}(P^{\prime}_{f_{1}})+T_{X^{\prime}}W^{u}(Q^{\prime}_{f_{1}})
=T_{X^{\prime}}W^{s}(P^{\prime}_{f_{1}})\oplus T_{X^{\prime}}W^{u}(Q^{\prime}_{f_{1}})$.
See Figure \ref{fg1}. 
%%%%%%%%%%%%%%%%%%%%%%%%%%%%%%%%%%%%%%%
\begin{figure}[hbt] 
\centering
\scalebox{0.85}{\includegraphics[clip]{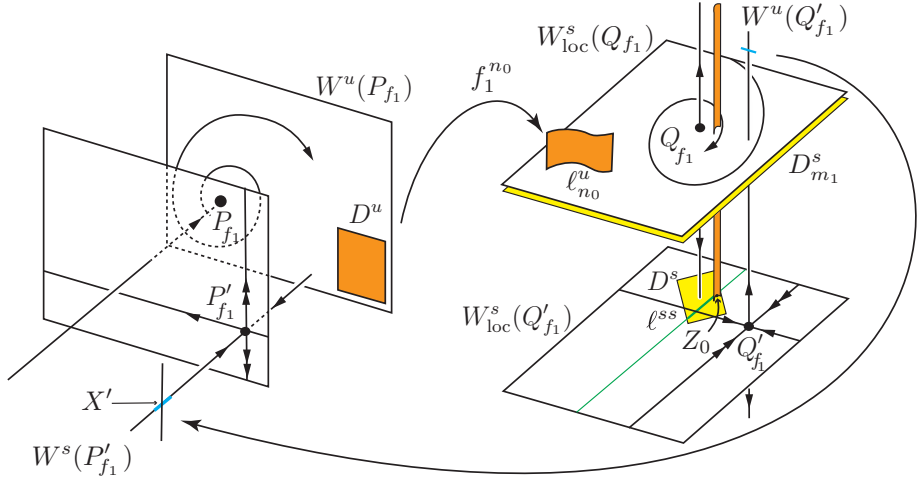}}
\caption{Heterodimensional and equidimensional cycles for $f_{1}$ }
\label{fg1}
\end{figure}
%%%%%%%%%%%%%%%%%%%%%%%%%%%%%%%%%%%%%%%

\begin{lemma}\label{lem:p5c}
Arbitrarily $C^{1}$-close to $f_{1}$ satisfying (\ref{1f1}) and  (\ref{2f1}), there is a $C^{r}$ diffeomorphism $f_{2}$  satisfying (\ref{1f1}) and  (\ref{2f1}) such that 
$W^u(P_{f_2})\pitchfork W_{\loc}^s(Q^{\prime}_{f_2})$ contains an intrinsic tangency, where 
$P_{f_2}$ and $Q^{\prime}_{f_2}$ are the continuations of $P_{f_1}$ and $Q^{\prime}_{f_1}$, respectively.
\end{lemma}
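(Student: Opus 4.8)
The plan is to create the intrinsic tangency by an arbitrarily small perturbation of $f_1$ supported on a single, well-chosen excursion of $W^u(P_{f_1})$, leaving the four saddle points and the finitely many heteroclinic orbits carrying conditions (\ref{1f1}) and (\ref{2f1}) untouched, so that the sole effect of the perturbation is to modify the local geometry of $W^u(P_{f_1})$ along one component of its intersection with $W^s_{\loc}(Q'_{f_1})$. As a preliminary observation, $W^u(P_{f_1})\pitchfork W^s_{\loc}(Q'_{f_1})\neq\emptyset$: by (\ref{2f1})(i) one has $W^u(P'_{f_1})\pitchfork W^s(Q'_{f_1})\neq\emptyset$, and by (\ref{1f1})(ii) $P_{f_1}$ is homoclinically related to $P'_{f_1}$, so the inclination lemma produces sheets of $W^u(P_{f_1})$ that are $C^1$-close to $W^u(P'_{f_1})$ and therefore also cross $W^s(Q'_{f_1})$ transversally; a suitable forward iterate of such an intersection point then lies in $W^s_{\loc}(Q'_{f_1})$. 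Fix a connected component $\gamma$ of $W^u(P_{f_1})\cap W^s_{\loc}(Q'_{f_1})$, a one-dimensional submanifold along which the two surfaces meet transversally. Since $Q'_{f_1}$ has distinct real eigenvalues, $W^s_{\loc}(Q'_{f_1})$ carries the $C^1$ strong stable foliation $\mathcal{F}^{ss}(Q'_{f_1})$ with one-dimensional leaves. In $C^1$ coordinates near a point of $\gamma$ for which $W^s_{\loc}(Q'_{f_1})=\{z=0\}$ and the leaves of $\mathcal{F}^{ss}(Q'_{f_1})$ are the lines $\{z=0,\ y=\mathrm{const}\}$, having an intrinsic tangency at a point $\boldsymbol{y}\in\gamma$ is equivalent to $\gamma$ being tangent to the $x$-direction at $\boldsymbol{y}$, i.e.\ tangent there to a leaf of $\mathcal{F}^{ss}(Q'_{f_1})$; the transversality of the two surfaces at $\boldsymbol{y}$ is then automatic and stable under small perturbations.

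Next I choose the support of the perturbation. Pick a point $q\in W^u(P_{f_1})\cap W^s(Q'_{f_1})$ lying outside the linearising neighbourhoods $V_P,V_{P'},V_Q,V_{Q'}$ of the four periodic points and outside $W^s_{\loc}(Q'_{f_1})$, and whose orbit is disjoint from the finitely many heteroclinic orbits fixed once and for all to realise the transverse intersections of (\ref{1f1})(iii) and (\ref{2f1})(i), the quasi-transverse orbit $X'$ of (\ref{2f1})(ii), and the four homoclinic relations of (\ref{1f1})(ii); write $\gamma$ for a component of $W^u(P_{f_1})\cap W^s_{\loc}(Q'_{f_1})$ met by the forward orbit of $q$, say $p=f_1^{m}(q)\in\gamma$. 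Such a $q$ exists because the accumulation of $W^u(P_{f_1})$ on $W^u(P'_{f_1})$ yields infinitely many components of $W^u(P_{f_1})\cap W^s(Q'_{f_1})$, from which one disjoint from the fixed finite list may be selected; and since $q$ is not periodic (its $\alpha$- and $\omega$-limit sets being the orbits of $P$ and $Q'$, which differ), for a sufficiently small neighbourhood $U$ of $q$ the iterates $\{f_1^n(U)\}_{n\in\mathbb{Z}}$ are pairwise disjoint, and $U$ stays disjoint from $V_P\cup V_{P'}\cup V_Q\cup V_{Q'}$, from $W^s_{\loc}(Q'_{f_1})$, from the fixed heteroclinic orbits, and from a small local unstable disc of $P_{f_1}$.

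Set $f_2=\psi\circ f_1$, where $\psi$ is a $C^r$ diffeomorphism supported in $U$ and as $C^1$-close to the identity as we wish; should $f_1$ be merely $C^1$, replace it first by a $C^1$-close $C^r$ diffeomorphism with the same features, reinstating if necessary the codimension-one quasi-transverse intersection $X'$ by a further arbitrarily small perturbation near the place it was lost. Since $U$ misses $V_P,V_{P'},V_Q,V_{Q'}$ and all the fixed heteroclinic orbits, the continuations $P_{f_2},P'_{f_2},Q_{f_2},Q'_{f_2}$, the eigenvalue conditions, the homoclinic relations, the transverse heteroclinic intersections and the quasi-transverse intersection all persist, so $f_2$ again satisfies (\ref{1f1}) and (\ref{2f1}); and since $f_2$ coincides with $f_1$ near $V_{Q'}$, we have $W^s_{\loc}(Q'_{f_2})=W^s_{\loc}(Q'_{f_1})$ together with the same strong stable foliation. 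On the other hand, because the $f_1^n(U)$ are pairwise disjoint and $U$ avoids a local unstable disc of $P_{f_1}$, one checks that $W^u(P_{f_2})\cap U=\psi(W^u(P_{f_1})\cap U)$, and hence, iterating forward along the orbit of $q$, that near $p$ the surface $W^u(P_{f_2})$ is the image of $W^u(P_{f_1})$ under a conjugate of $\psi$, still $C^1$-small and locally supported. It then suffices to take $\psi$ so that this local modification bends $W^u(P_{f_1})$ near $\gamma$ into a surface whose trace on $\{z=0\}$ acquires an interior point of (quadratic) tangency with a leaf of $\mathcal{F}^{ss}(Q'_{f_1})$ — a routine local perturbation — and, choosing $\psi$ close enough to the identity, $f_2$ is $C^1$-close to $f_1$ while $W^u(P_{f_2})\pitchfork W^s_{\loc}(Q'_{f_2})$ contains an intrinsic tangency.

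I expect the main difficulty to be the choice made in the second step: the support $U$ must be threaded between the infinitely many returns of the orbit of $q$ to the linearising neighbourhoods of the four periodic points and kept off the finitely many heteroclinic orbits carrying (\ref{1f1})--(\ref{2f1}), so that the bending of $W^u(P_{f_1})$ is localised and costs nothing elsewhere. This is handled by the standard device — going back to Newhouse and used systematically by Bonatti--D\'{\i}az — of perturbing on a single fresh excursion of an invariant manifold, the room needed being supplied here by the accumulation of $W^u(P_{f_1})$ on $W^u(P'_{f_1})$. Observe that only the real-eigenvalue condition on $Q'_{f_1}$ is used, so as to give meaning to $\mathcal{F}^{ss}(Q'_{f_1})$; the non-real eigenvalue hypotheses on $P$ and $Q$ enter this step only indirectly, via Lemma \ref{lem2.2}.
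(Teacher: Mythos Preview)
Your argument has a genuine gap at the ``routine local perturbation'' step. You fix a component $\gamma\subset W^u(P_{f_1})\pitchfork W^s_{\loc}(Q'_{f_1})$ and claim that a perturbation $\psi$ arbitrarily $C^1$-close to the identity can bend $W^u(P_{f_1})$ so that the new trace on $\{z=0\}$ becomes tangent to a leaf $\{y=\mathrm{const}\}$ of $\mathcal{F}^{ss}(Q'_{f_1})$. But if $\gamma$ is uniformly transverse to $\mathcal{F}^{ss}(Q'_{f_1})$, say with angle bounded below by some $\theta_0>0$, then any $C^1$-perturbation of $W^u(P_{f_1})$ of size $\epsilon\ll\theta_0$ keeps the (still transverse) intersection curve $C^1$-close to $\gamma$, hence still uniformly transverse to the foliation; no intrinsic tangency is created. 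Composing with a fixed transition $f_1^m$ does not help: the amplification factor is bounded independently of $\epsilon$, so letting $\epsilon\to 0$ still kills the effect on tangent directions at $p$. Your own closing remark that ``only the real-eigenvalue condition on $Q'_{f_1}$ is used'' is precisely the symptom of the problem: nothing in your setup forces the angle between $\gamma$ and $\mathcal{F}^{ss}(Q'_{f_1})$ to be small.

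The paper's proof supplies exactly the missing mechanism, and it uses the non-real contracting eigenvalues at $Q_{f_1}$ in an essential way. Rather than working directly at $Q'_{f_1}$, the paper first lands an arc $\ell^u_{n_0}\subset W^u(P_{f_1})\pitchfork W^s_{\loc}(Q_{f_1})$ (via (\ref{1f1})(iii)) and, separately, pulls back a strong-stable leaf $\ell^{ss}\subset W^s(Q'_{f_1})$ into the linearising chart at $Q_{f_1}$ (via the homoclinic relation $Q_{f_1}\sim Q'_{f_1}$). In that chart the local dynamics on $W^s_{\loc}(Q_{f_1})$ is a contraction composed with an \emph{irrational rotation}; iterating $\ell^u_{n_0}$ forward therefore realises every direction in $T W^s_{\loc}(Q_{f_1})$, and one can find iterates for which the angle between $\ell^u_{n_0}$ and the projected leaf $\ell^{ss}_{m_1}$ is below any prescribed $\epsilon$. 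Only then is the small perturbation applied, and now it is genuinely $C^1$-small. To repair your argument you must insert a comparable angle-reduction step; without it the lemma does not follow.
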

\noindent
Together with Lemma \ref{lem2.2} and Remark \ref{rmk2.3}, it immediately implies  the next result.
\begin{corollary}\label{coro:p5c}
The intrinsic tangency is obtained by an arbitrarily small $C^1$-perturbation of any $f\in \mathscr{A}$.
\hfill$\square$
\end{corollary}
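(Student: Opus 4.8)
The plan is to reduce the intrinsic tangency to a \emph{closed} intersection curve, and then to produce such a curve from the spiralling of $W^u(P_{f_1})$ caused by the non-real unstable eigenvalues of $P$. First record a tangent-space fact. Let $p\in W^u(P_{f_1})\pitchfork W_{\loc}^s(Q'_{f_1})$ and let $\gamma$ be the one-dimensional intersection curve near $p$, so that $T_p\gamma=T_pW^u(P_{f_1})\cap T_pW_{\loc}^s(Q'_{f_1})$ by transversality. Then $\gamma$ is tangent at $p$ to the leaf $\ell^{ss}$ of $\mathcal F^{ss}(Q'_{f_1})$ through $p$ if and only if $T_p\ell^{ss}=T_p\gamma\subset T_pW^u(P_{f_1})$, that is, if and only if $W^u(P_{f_1})$ is tangent to $\ell^{ss}$ at $p$; otherwise $T_p\gamma$ and $T_p\ell^{ss}$ span $T_pW_{\loc}^s(Q'_{f_1})$, forcing $T_pW^u(P_{f_1})\cap T_p\ell^{ss}=\{0\}$ and hence transversality of $W^u(P_{f_1})$ and the leaf. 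So an intrinsic tangency inside $W^u(P_{f_1})\pitchfork W_{\loc}^s(Q'_{f_1})$ is exactly a point where the intersection curve is tangent to $\mathcal F^{ss}(Q'_{f_1})$. In particular, if a component $C$ of $W^u(P_{f_1})\cap W_{\loc}^s(Q'_{f_1})$ is a closed curve, then, writing $\mathcal F^{ss}(Q'_{f_1})$ as the level foliation of a $C^1$ coordinate $\eta$ on $W_{\loc}^s(Q'_{f_1})$, the continuous function $\eta|_C$ attains a maximum and a minimum, at which $C$ is tangent to a leaf; thus a \emph{circular transverse heterodimensional intersection} automatically carries at least two intrinsic tangencies, as in \cite{DR92,KS12}. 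It therefore suffices to produce, by an arbitrarily $C^1$-small perturbation $f_2$ of $f_1$ that still satisfies (\ref{1f1}) and (\ref{2f1}), a closed component of $W^u(P_{f_2})\cap W_{\loc}^s(Q'_{f_2})$.

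To create it we use that $P_{f_1}$ has non-real unstable eigenvalues, so $W^u(P_{f_1})$ spirals around $P_{f_1}$: it is tiled by annuli $A_n=f_1^{n+1}(W_{\loc}^u(P_{f_1}))\setminus f_1^n(\Int W_{\loc}^u(P_{f_1}))$ whose boundary circles spiral outwards while rotating by the argument of the unstable eigenvalue. By (\ref{2f1})(i) and the homoclinic relation between $P'_{f_1}$ and $P_{f_1}$ in (\ref{1f1})(ii) --- equivalently by (\ref{1f1})(iii) and the relation between $Q'_{f_1}$ and $Q_{f_1}$ --- the inclination lemma yields $W^u(P_{f_1})\pitchfork W^s(Q'_{f_1})\neq\emptyset$; pushing a transverse intersection arc forward by $f_1$ places it in $W_{\loc}^s(Q'_{f_1})$, and, because the transitions of the (simple) cycle composed with the rotation of the spiral carry $W^u(P_{f_1})$ so as to wind around $W_{\loc}^u(Q'_{f_1})$, the trace $W^u(P_{f_1})\cap W_{\loc}^s(Q'_{f_1})$ contains a long winding arc $\gamma^\sharp$ whose two endpoints on $\partial W_{\loc}^s(Q'_{f_1})$ can be made arbitrarily close. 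A $C^1$-small $C^\infty$ perturbation $\phi$ supported in a small ball $B$ --- chosen disjoint from its own $f_1$-iterates, from neighbourhoods of $P_{f_1},Q_{f_1},P'_{f_1},Q'_{f_1}$, and from the orbit of the quasi-transverse intersection $X'$, and placed in a fundamental domain of $W^u(P_{f_1})$ upstream of $\gamma^\sharp$ --- can then be chosen so that $f_2:=\phi\circ f_1$ joins the two endpoints, producing a closed component of $W^u(P_{f_2})\cap W_{\loc}^s(Q'_{f_2})$ (equivalently, $W^u(P_{f_2})$ caps off a disk of $W_{\loc}^s(Q'_{f_2})$). Since $\phi$ is $C^1$-small and supported away from $Q'_{f_1}$, the surfaces $W^u(P_{f_2})$ and $W_{\loc}^s(Q'_{f_2})$ remain transverse along this component and $\mathcal F^{ss}(Q'_{f_2})=\mathcal F^{ss}(Q'_{f_1})$; by the first step this gives the desired intrinsic tangency in $W^u(P_{f_2})\pitchfork W_{\loc}^s(Q'_{f_2})$.

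It remains to check that $f_2$ still satisfies (\ref{1f1}) and (\ref{2f1}): the four hyperbolic periodic points have continuations, equal to the originals since $\phi$ is supported away from them; the transverse intersections of (\ref{1f1})(iii) and (\ref{2f1})(i) and the homoclinic relations of (\ref{1f1})(ii) are $C^1$-robust; and the quasi-transverse intersection $X'$ of (\ref{2f1})(ii) persists because $B$ avoids a neighbourhood of the orbit of $X'$, so the local pieces of $W^s(P'_{f_2})$ and $W^u(Q'_{f_2})$ at $X'$ agree with those of $f_1$. Since $\phi$ is $C^\infty$ and, by the simplicity of its cycle (Remark \ref{rmk2.3}), $f_1$ may be taken of class $C^r$, the map $f_2$ is of class $C^r$, and it is as $C^1$-close to $f_1$ as desired. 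The main difficulty is the middle step: showing, via the simple cycle structure of Remark \ref{rmk2.3} and the rotation inherent in the spiralling of $W^u(P_{f_1})$, that the trace on $W_{\loc}^s(Q'_{f_1})$ really contains a winding arc with nearly coincident endpoints, and that the closure can be achieved by a genuinely $C^1$-small perturbation without disturbing any of the data of (\ref{1f1})--(\ref{2f1}), in particular the quasi-transverse orbit of $X'$.
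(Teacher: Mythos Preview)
Your opening reduction is correct: an intrinsic tangency inside $W^u(P_{f_2})\pitchfork W_{\loc}^s(Q'_{f_2})$ is exactly a point where the intersection curve is tangent to a leaf of $\mathcal F^{ss}(Q'_{f_2})$, and a closed component forces such points by the extremum argument (the paper itself cites circular intersections as examples). The gap is the middle step, which you flag yourself and which does not go through as written. The spiral you invoke lives \emph{near $P_{f_1}$}: it records that the fundamental annuli $A_n$ of $W^u(P_{f_1})$ are rotated copies of each other under the local dynamics there. But the trace on $W_{\loc}^s(Q'_{f_1})$ is seen only after a transition, and the affine transitions of the simple cycle do not convert that internal rotation into a winding of the trace around $W^u_{\loc}(Q'_{f_1})$. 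Since all eigenvalues at $Q'_{f_1}$ are \emph{real} by (\ref{1f1})(i), disks of $W^u(P_{f_1})$ reaching a neighbourhood of $Q'_{f_1}$ accumulate on $W^u_{\loc}(Q'_{f_1})$ from a fixed transverse direction, with no rotation; in linearising coordinates each such disk is $C^1$-close to a coordinate $2$-plane containing $W^u_{\loc}(Q'_{f_1})$, so its trace on $W^s_{\loc}(Q'_{f_1})$ is an arc $C^1$-close to a fixed line, with endpoints on opposite sides of $\partial W^s_{\loc}(Q'_{f_1})$, never nearly coincident. The rotation at $P_{f_1}$ only reparametrises $W^u(P_{f_1})$ by annuli; it cannot alter the direction in which the fixed transition delivers that surface to $Q'_{f_1}$, and no $C^1$-small perturbation closes the arc.

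The paper instead exploits the \emph{other} non-real pair available in $\mathscr A$, the contracting eigenvalues at $Q_{f_1}$, and works in $W^s_{\loc}(Q_{f_1})$ rather than $W^s_{\loc}(Q'_{f_1})$. In linearising coordinates there, $f_1^{\per(Q_{f_1})}|_{W^s_{\loc}(Q_{f_1})}$ is a contracting irrational rotation. One places in this plane both the transverse-intersection arc $\ell^u_{n_0}\subset W^u(P_{f_1})\cap W^s_{\loc}(Q_{f_1})$ supplied by (\ref{1f1})(iii) and the projection $\ell^{ss}_{m_1}$ of a backward iterate of a strong-stable leaf of $Q'_{f_1}$, the latter available because (\ref{1f1})(ii) and the inclination lemma make disks of $W^s(Q'_{f_1})$ $C^1$-accumulate on $W^s_{\loc}(Q_{f_1})$. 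The irrational rotation now acts in the very plane where both arcs sit, so a suitable choice of $m_1$ makes the angle at an intersection point $\boldsymbol{p}_0$ arbitrarily small, and a single $C^r$-small bump supported near $f_1^{-n_0}(\boldsymbol{p}_0)$ produces the intrinsic tangency directly. The moral: put the rotation where the tangency is being manufactured, not at the far end of the cycle.
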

\begin{proof}[Proof of Lemma \ref{lem:p5c}]
We here recall that the above $f\in \mathscr{A}$ has the saddle periodic point $Q$ such that 
$Df^{\per(Q)}(Q)$ has a pair of non-real contracting eigenvalues. 
Hence, there exist 
a  small neighborhood $V$ of $Q_{f_1}$,  
a local chart $(x,y,z)$ in $V$
and 
real constants $a_s, a_u, \vartheta \in \mathbb R$ with 
$0<\vert a_s\vert <1 <\vert a_u\vert$ 
such that
\begin{itemize}
\item $\overline{V}\cap W^{u}_{\loc}(P_{f_{1}})=\emptyset$ 
and $\overline{V}\cap \mathcal{O}_{f_{1}}(X^{\prime})=\emptyset$, where 
$\overline{V}$ is the closure of $V$ and 
$\mathcal{O}_{f_{1}}(\cdot)$ is the 
orbit of the corresponding point for $f_{1}$; 
\item 
$Q_{f_1} =(0,0,0)$ and 
\begin{equation}\label{eq:p4curve3}
f_1^{\per(Q_{f_1})}(x,y,z) =\left( (x,y)\; {}^t\!A_s,\ a_uz \right), \  A_s=a_s\left( \begin{array}{cc} \cos 2\pi \vartheta & -\sin 2\pi \vartheta\\ \sin 2\pi \vartheta & \cos 2\pi \vartheta \end{array} \right)
\end{equation}
for any $(x,y,z) \in V$. 
Moreover, after a small local perturbation if necessary, we may assume that the above $\vartheta$ is irrational. 
\end{itemize}

On the one hand, by (\ref{1f1})-(iii), 
there exist an unstable disk 
$$D^{u}\subset W^{u}_{\loc}(P_{f_{1}})\cap V^{c}$$  and 
a positive integer $n_{0}$  such that the set  of transverse intersections 
$f_{1}^{n_{0}}(D^{u})\pitchfork W^{s}_{\loc}(Q_{f_1})$ 
contains an arc $\ell^{u}_{n_{0}}$. See Figure \ref{fg1}. 
On the other hand, by (\ref{1f1})-(ii) and the Inclination Lemma, 
it follows 
that $W^{s}(Q^{\prime}_{f_{1}})$ contains two-dimensional disks
for which the backward images converge to  $W_{\loc}^{s}(Q_{f_{1}})$ in $C^{1}$ topology. 
Hence, 
for any $\epsilon>0$, 
there exist an integer $m_{0}\geq 0$ and a stable disk 
$D^{s}\subset W^{s}_{\loc}(Q^{\prime}_{f_{1}})$
containing a point of $W^{u}(Q_{f_{1}})\pitchfork W^{s}_{\loc}(Q^{\prime}_{f_{1}})$ and 
such that, for any integer $m\geq 0$,
$$
d_{C^1}\left(D^{s}_{m},\ W_{\loc}^{s}(Q_{f_{1}})\right)<\epsilon
$$
where $d_{C^1}(\cdot ,\cdot)$ is the $C^1$ distance between corresponding submanifolds, 
and 
$D^{s}_{m}$ is a component of $f_{1}^{-m\per(Q_{f_{1}})-m_{0}}(D^{s})\cap V$.

Note that, by  Remark \ref{rmk2.3} together with (\ref{1f1})-(i), 
one has the strong stable foliation 
$\mathcal{F}^{ss}(Q^{\prime}_{f_{1}})$ of $Q^{\prime}_{f_{1}}$ 
whose leaves are of codimension one  in $W^{s}(Q^{\prime}_{f_{1}})$. 
Moreover,
observe that, by the irrational rotation of (\ref{eq:p4curve3}) with $\vartheta\not\in \mathbb{Q}$,
the images of $\ell ^u_{n_0}$ by the forward iterations of $f_1^{\per(Q_1)}$
rotate and converge
to $Q_{f_1}$ in $W^{s}_{\loc}(Q_{f_1})$, see Figure \ref{fg1-2}-(a).
Hence, 
there are 
an integer $m_{1}\geq 0$
and 
an arc $\ell^{ss}\subset D^{s}\cap \mathcal{F}^{ss}_{\loc}(Q^{\prime}_{f_{1}})$ 
such that 
\begin{itemize}
\item  $\ell^{u}_{n_{0}}\cap \ell^{ss}_{m_{1}}\neq \emptyset$ where 
$\ell^{ss}_{m_{1}}:=\pi^{s}\left(f_{1}^{-m_{1}\per(Q_{f_{1}})-m_{0}}(\ell^{ss})\right)$ 
for the canonical projection $\pi^{s}:V\to W^{s}_{\loc}(Q_{f_1})$ with $\pi^{s}(x, y, z)=(x, y, 0)$;
\item there is a point ${\bm p}_{0}\in \ell^{u}_{n_{0}}\cap \ell^{ss}_{m_{1}}$ such that 
$$
\angle\left(T_{{\bm p}_{0}}\ell^{u}_{n_{0}},\  T_{{\bm p}_{0}} \ell^{ss}_{m_{1}}\right)<\epsilon,
$$
where $\angle(\cdot,\cdot)$ stands for the angle  
between the corresponding subspaces in $T_{{\bm p}_{0}} W^{s}_{\loc}(Q_{f})$.  
\end{itemize}
%%%%%%%%%%%%%%%%%%%%%%%%%%%%%%%%%%%%%%%
\begin{figure}[hbt] 
\centering
\scalebox{0.85}{\includegraphics[clip]{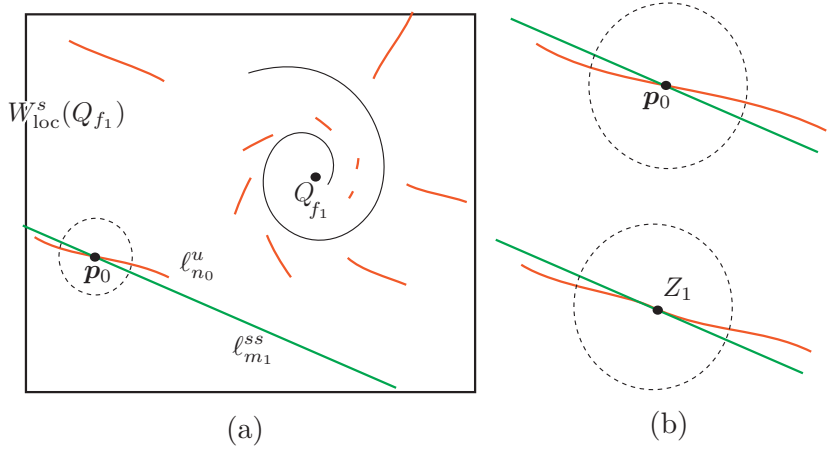}}
\caption{Images of $\ell^{u}_{n_{0}}$ and $\ell^{ss}_{m_{1}}$}
\label{fg1-2}
\end{figure}
%%%%%%%%%%%%%%%%%%%%%%%%%%%%%%%%%%%%%%%
Therefore, one can obtain a diffeomorphism $f_2$ with a tangency $Z_{1}$ near  ${\bm p}_{0}$ between $\ell^{u}_{n_{0}}$ and $\ell^{ss}_{m_{1}}$,
if necessary perturbing $f_{1}^{n_{0}}$ slightly 
in a small neighborhood of $f_{1}^{-n_{0}}({\bm p}_{0})$ in $D^{u}$. 
It follows from the above conditions
 that such a $C^r$ perturbation deforming $\ell^{u}_{n_{0}}$ as shown in Figure \ref{fg1-2}-(b)
can be defined by the composition of appropriate bump function and isometry.
Thus we have 
the intrinsic tangency 
$$Z_{0}:=f_{2}^{m_{1}\per(Q_{f_{2}})+m_{0}}(Z_{1})$$
between $W^u(P_{f_2})$ and $W^{s}_{\loc}(Q_{f_{2}}^{\prime})$.
This concludes the proof of Lemma \ref{lem:p5c}.
\end{proof}

\begin{proof}[Proof of Proposition \ref{prop2.1}]
Arbitrarily $C^1$ close to $f\in \mathscr{A}$, 
from Lemmas \ref{lem2.2},  
one has a  diffeomorphism $f_1$ with a 
heterodimensional cycle associated with saddles $P^{\prime}_{f_1}$ and $Q^{\prime}_{f_1}$ 
with distinct real eigenvalues 
which are
homoclinically related to the continuations $P_{f_1}$ and $Q_{f_1}$, respectively.
Moreover, it follows 
by Lemma \ref{lem:p5c} and Corollary \ref{coro:p5c} that, arbitrarily $C^1$ near $f_1$, 
one obtains  a  $C^r$ diffeomorphism $f_2$ 
satisfying (\ref{1f1}) and  (\ref{2f1}) which has an intrinsic tangency $Z_0$ between 
$W^u(P_{f_2})$ and 
 a  leaf $\ell^{ss}$ of the strong stable foliation $\mathcal{F}^{ss}(Q^{\prime}_{f_{2}})$.
Note that $\ell^{ss}$ is almost parallel to $W_{\loc}^{ss}(Q^{\prime}_{f_{2}})$ 
in the linearizing coordinates in a neighborhood of $Q^{\prime}_{f_{2}}$. 
See Figure \ref{fg2}-(a).

Recall that, by (\ref{2f1}), 
$W^{s}_{\loc}(P^{\prime}_{f_{2}})\cap W^{u}(Q^{\prime}_{f_{2}})$ 
contains the quasi-transverse intersection $X^{\prime}$
as shown in Figure \ref{fg1}. Hence 
one has a positive integer $k_{0}$ such that 
the point $X^{\prime}_0=f_{2}^{k_{0}}(X^{\prime})$ is contained in $W^{u}_{\loc}(Q_{f_{2}}^{\prime})$.
Note that 
since  $\overline{V}\cap \mathcal{O}_{f_{2}}(X^{\prime})=\emptyset$ where $V$ is the neighborhood $V$ of $Q_{f_1}$, 
$X^{\prime}_0\not \in \overline{V}$.
We here
consider a small segment $L^{s}\subset W^{s}(P^{\prime}_{f_{2}})$ with 
 $X^{\prime}_0\in L^{s}$ and $L^{s}\cap \overline{V}=\emptyset$. 
 Observe that 
if necessary perturbing $f_{2}$ slightly in a small neighborhood of $L^{s}$, 
it follows from Inclination Lemma that, 
for any  large  integer $m>0$, 
the backward image $f_{2}^{-m\per(Q^{\prime}_{f_2})}(L^{s})$ 
contains a segment 
which  is sufficiently $C^{1}$-close to  $W_{\loc}^{ss}(Q^{\prime}_{f_{2}})$. 
Denote $f_{2}^{-m\per(Q^{\prime}_{f_2})}(X^{\prime}_{0})$ by $X^{\prime}_{m}$. 
See Figure \ref{fg2}-(b).

 %%%%%%%%%%%%%%%%%%%%%%%%%%%%%%%%%%%%%%%
\begin{figure}[hbt] 
\centering
\scalebox{0.85}{\includegraphics[clip]{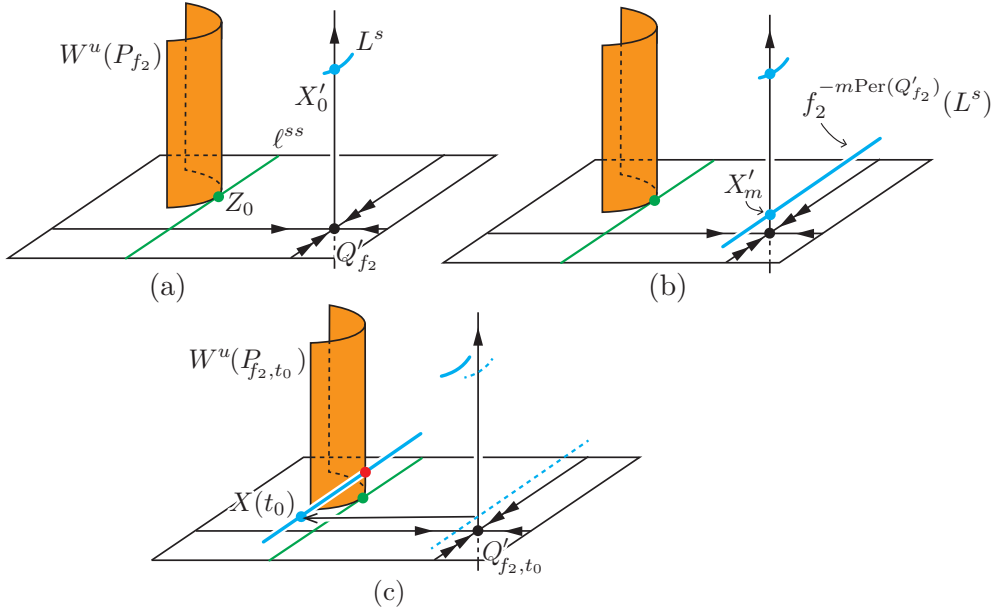}}
\caption{The appearance of an intrinsic tangency}
\label{fg2}
\end{figure}
%%%%%%%%%%%%%%%%%%%%%%%%%%%%%%%%%%%%%%%

We here consider a one-parameter family of $C^{r}$ diffeomorphisms, $\{ f_{2, t}\}_{t\in \mathbb{R}}$  with $f_{2,0}=f_{2}$, 
which \emph{unfolds generically} the quasi-transverse intersection at $X^{\prime}_{m}$ used in \cite{DR92, KNS10}, 
that is, there are  a $C^{1}$-curve $\{X(t)\}_{t\in \mathbb{R}}$ and 
a $C^{1}$ map $\rho : \mathbb{R}\to \mathbb{R}^{+}$ with 
$X(t)\in f_{2, t}^{k_{0}+m}(W_{\loc}^{u}(P_{f_{2, t}}))$ for every $t$ and $\rho(0)>0$ such that:
\begin{itemize}
\item $X(0)=X^{\prime}_{m}$ and $\mathrm{dist}\left(X(t), W^{u}_{\loc}(Q^{\prime}_{f_{2, t}}) \right)=|t| \rho(t)$;
\item 
$T_{X^{\prime}_{m}} W^{s}(P^{\prime}_{f_{2, t}}) \oplus T_{X^{\prime}_{m}} W^{u}(Q^{\prime}_{f_{2, t}})  \oplus N 
= T_{X^{\prime}_{m}}  M$, where $N$ is the one-dimensional space spanned by $\frac{d X}{dt}(0)$.
\end{itemize}
Observe that, there is a real number $t_{0}$ arbitrarily near $0$
such that 
$f_{2, t_{0}}^{-m\per(Q^{\prime}_{f_{2, t_0}})}(L^s)$ and $W^{u} (P_{f_{2, t_{0}}})$ have a quadratic tangency. See Figure \ref{fg2}-(c). 
Let us denote $f_{2, t_{0}}$ by $g$. 
It follows  that $g$ satisfies the condition (\ref{B3}). 

Note that it follows from (\ref{1f1})-(ii) that  (\ref{B2}) holds for $g$. 
In addition, 
since the amount of all perturbations can be taken arbitrarily small, 
 (\ref{B1}) holds for $g$. In conclusion,  $g$ is contained in $\mathscr{B}$. 
 This ends the proof of Proposition \ref{prop2.1}.
\end{proof}
\noindent
The above proof contains the following remark: 
\begin{remark}
Every $C^r$ diffeomorphism satisfying (\ref{1f1}) and (\ref{2f1}) 
can be $C^r$ approximated by diffeomorphisms with a non-transverse equidimensional cycle satisfying the conditions (\ref{B1}), (\ref{B2}) and (\ref{B3}).
\end{remark}

\section{Homoclinic tangencies with the Tatjer condition}\label{sec3}
We first recall a diffeomorphism with a generalized homoclinic tangency satisfying some 
conditions 
presented in \cite{Tj01}, which plays an important role in 
the proof of Theorem \ref{thm1}. 
The purpose of this section is to show  Proposition \ref{thm3.1}, 
where we claim that diffeomorphisms satisfying the Tatjer condition
are \emph{not} so special in a neighborhood of $\mathscr{B}$. 

Let $f$ be a diffeomorphism on a  three-dimensional Riemannian manifold $M$
which has a homoclinic tangency of a saddle periodic point $P^{\prime}$.
We here note that one of the requirement in Tajter's conditions 
is that the central-stable bundle $E^{cs} = E^s \oplus E^c$ at $P^\prime$ 
must be extended along the stable manifold $W^s(P^\prime)$ of $P^\prime$. 
See the explanation just below (\ref{C2}).
To guarantee it as well as the quadratic condition in  (\ref{C1}), 
we have to assume that the regularity of $f$ is at least $C^2$.
Suppose 
the derivative for $f$ at $P^\prime$ has  real eigenvalues $\lambda_{s}, \lambda_{cu}$ and $\lambda_{u}$ 
satisfying $|\lambda_{s}|<1<|\lambda_{cu}|<|\lambda_{u}|$. 
Assume that there are $C^{1}$ linearizing coordinates $(x, y, z)$ for $f$ on a neighborhood $U^{\prime}$ of $P^{\prime}$ such that 
$$P^{\prime}=(0,0,0),\quad 
f(x,y,z)=(\lambda_{cu} x,\ \lambda_{u} y,\ \lambda_{s}z)$$
for any $(x, y, z)\in U$. 
 In $U^{\prime}$, the local  unstable and stable manifolds of $P^{\prime}$ are  given respectively as
$$
W_{\loc}^{u}(P^{\prime})=\left\{(x, y, 0)\ ;\  |x|, |y|<\delta\right\},\ 
W_{\loc}^{s}(P^{\prime})=\left\{(0, 0, z)\ ;\  |z|<\delta\right\}
$$ for some $\delta>0$. 
Moreover one has the local strong unstable $C^{1}$ foliation $\mathcal{F}^{uu}_{\loc}(P^{\prime})$ 
in $W_{\loc}^{u}(P^{\prime})$ 
such that, 
for  any point $\bar{\boldsymbol{x}}=(\bar{x}, \bar{y}, 0)\in W_{\loc}^{u}(P^{\prime})$, 
the leaf $\ell^{uu}(\bar{\boldsymbol{x}})$ 
of $\mathcal{F}^{uu}_{\loc}(P^{\prime})$ containing $\bar{\boldsymbol{x}}$ 
is given as 
$$
\ell^{uu}(\bar{\boldsymbol{x}})=
\left\{(\bar{x}, y, 0)\ ;\ |y|<\delta \right\}\!.
$$ 

%For any point $\bar{\boldsymbol{x}}=(\bar{x}, 0, 0)\in W^{u}_{\loc}(P^{\prime})$, 
%the \emph{s-fiber} $\Phi^{s}(\bar{\boldsymbol{x}})$  at $\bar{\boldsymbol{x}}$ 
%is defined as the tangent space generated by the vectors 
%$(\frac{\partial}{\partial x})_{\bar{\boldsymbol{x}}}$, 
%$(\frac{\partial}{\partial y})_{\bar{\boldsymbol{x}}}\in T_{\bar{\boldsymbol{x}}} M$. 
%Furthermore, 
%if a point $\boldsymbol{x}\in W^{s}(P^{\prime})$ satisfies  
%$\bar{\boldsymbol{x}}=f^{-n}(\boldsymbol{x})\in W_{\loc}^{s}(P^{\prime})$
%for some integer $n>0$,
%we define  
%$$ \Phi^{s}(\boldsymbol{x}):=Df^{n}(\bar{\boldsymbol{x}})( \Phi^{s}(\bar{\boldsymbol{x}}) ),$$
%which is called the \emph{s-fiber}  at $\boldsymbol{x}$.

We say that a homoclinic tangency satisfies the \emph{Tatjer condition} 
(which corresponds to the type I of case B in \cite[Theorem 1]{Tj01}) 
if the following (\ref{C1})--(\ref{C3}) hold. 
\begin{enumerate}
\renewcommand{\theenumi}{C\arabic{enumi}}
\renewcommand{\labelenumi}{(\theenumi)}
\item \label{C1}
$W^{u}(P^{\prime})$ and $W^{s}(P^{\prime})$ have a quadratic tangency at 
$\boldsymbol{x}_{0}$ which does not
belong to  the strong unstable manifold $W^{uu}(P^{\prime})$ of $P^{\prime}$;
\item \label{C2}
$W^{s}(P^{\prime})$ is tangent  to the leaf $\ell^{uu}(\boldsymbol{x}_{0})$  of  $\mathcal{F}_{\loc}^{uu}(P^{\prime})$ at $\boldsymbol{x}_{0}$.
\end{enumerate}
For the tangency $\boldsymbol{x}_{0}$, 
we here consider the forward image $\bar{\boldsymbol{x}}_{0}=f^{n_{0}}(\boldsymbol{x}_{0})$ 
for a large $n_{0}\geq 0$
satisfying 
$\bar{\boldsymbol{x}}_{0}\in W^{s}_{\loc}(P^{\prime})$. 
In addition, we consider 
a plane $S(\bar{\boldsymbol{x}}_{0})$ 
containing $\bar{\boldsymbol{x}}_{0}$ such that 
$T_{\bar{\boldsymbol{x}}_{0}} S(\bar{\boldsymbol{x}}_{0})$ is generated by  
$(\frac{\partial}{\partial x})_{\bar{\boldsymbol{x}}_{0}}$, 
$(\frac{\partial}{\partial z})_{\bar{\boldsymbol{x}}_{0}}\in T_{\bar{\boldsymbol{x}}_{0}} M$. 
Note that by the chosen linearizing coordinates on a neighborhood of $P^\prime$, the plane $S(\bar{\boldsymbol{x}}_{0})$ in $T_{\bar{\boldsymbol{x}}_{0}} M$ corresponds to the central-stable bundle at this tangent point.
See Figure \ref{fg3}.
The last condition is the following: 
\begin{enumerate}
\setcounter{enumi}{2}
\renewcommand{\theenumi}{C\arabic{enumi}}
\renewcommand{\labelenumi}{(\theenumi)}
\item \label{C3}
$S(\bar{\boldsymbol{x}}_{0})$ 
and 
$W^{u}(P^{\prime})$ 
are transverse at $\bar{\boldsymbol{x}}_{0}$.
\end{enumerate}
%%%%%%%%%%%%%%%%%%%%%%%%%%%%%%%%%%%%%%%
\begin{figure}[hbt] 
\centering
\scalebox{0.86}{\includegraphics[clip]{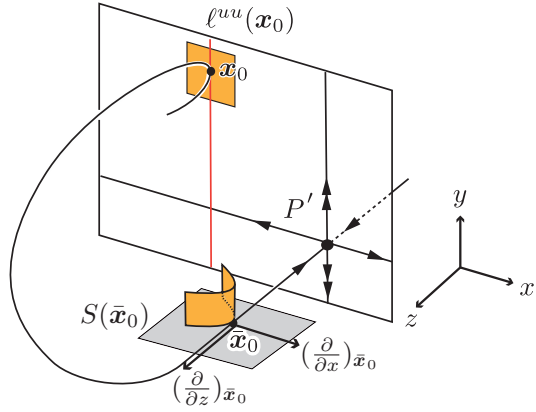}}
\caption{The Tatjer condition}
\label{fg3}
\end{figure}
%%%%%%%%%%%%%%%%%%%%%%%%%%%%%%%%%%%%%%%

Denote by $\mathscr{C}$ the set of all $C^{r}$   
diffeomorphisms on  $M$
which have homoclinic tangencies satisfying the Tatjer condition.
This set $\mathscr{C}$ is contained in the class of diffeomorphisms having ``generalized homoclinic tangencies'' defined in \cite{Tj01}.
\smallskip 

We here claim that homoclinic tangencies satisfying the Tatjer condition
are \emph{not} so rare in our context. 
Let $\ell_{1}$ and $\ell_{2}$ be submanifolds of $M$. 
For an intersection $\boldsymbol{x}\in \ell_{1}\cap \ell_{2}$, define 
$$c_{\boldsymbol{x}}( \ell_{1}, \ell_{2})=
\dim M-\bigr(\dim T_{\boldsymbol{x}}\ell_{1}+\dim T_{\boldsymbol{x}}\ell_{2}-\dim (T_{\boldsymbol{x}}\ell_{1}\cap T_{\boldsymbol{x}}\ell_{2})\bigr),
$$ which is called the 
\emph{codimension} at the intersection $\boldsymbol{x}$ associated with  $\ell_{1}$ and $\ell_{2}$. 
See \cite{BR-ax}.
The condition (\ref{C3}) implies that 
the codimension at the intersection $\bar{\boldsymbol{x}}_{0}$
associated with $S(\bar{\boldsymbol{x}}_{0})$ and $W^{u}(P^{\prime})$ 
is zero. 
However, 
(\ref{C1}) and (\ref{C2}) require pairs of submanifolds of codimension one as well as two:
$$c_{\boldsymbol{x}_{0}}( W^{u}(P^{\prime}), W^{s}(P^{\prime}))=1, \quad
c_{\boldsymbol{x}_{0}}(  W^{s}(P^{\prime}), \ell^{uu}(\boldsymbol{x}_{0}))=2.$$
Thus, the homoclinic tangency with the Tatjer condition seems to be very special, as mentioned in \cite{Tj01}.
However, 
it can be realized by $C^{r}$-small perturbations of any elements of $\mathscr{B}$. That is,

\begin{proposition}\label{thm3.1}
 $\mathscr{B}$ is contained in the $C^{r}$, $r\geq 2$, closure of  $\mathscr{C}$. 
\end{proposition}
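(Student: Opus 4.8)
The plan is to start from an element $f\in\mathscr{B}$ with a non-transverse equidimensional cycle associated with saddles $P,P'$ satisfying (\ref{B1})--(\ref{B3}), and produce, by $C^r$-small perturbations, a diffeomorphism with a homoclinic tangency of $P'$ satisfying the Tatjer condition (\ref{C1})--(\ref{C3}). Recall that for $f\in\mathscr{B}$ the unstable eigenvalues of $P'$ are real and distinct (say $\lambda_{cu},\lambda_u$ with $1<|\lambda_{cu}|<|\lambda_u|$) and there is a contracting real eigenvalue $\lambda_s$; by a preliminary $C^r$-small perturbation we may linearize $f$ near $P'$ in coordinates $(x,y,z)$ as in the setup of Section~\ref{sec3}, so that $W^u_{\loc}(P')$ is the $(x,y)$-plane, $W^s_{\loc}(P')$ is the $z$-axis, and the strong unstable foliation $\mathcal F^{uu}_{\loc}(P')$ has leaves $\{x=\text{const}\}$. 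The first step is to convert the \emph{heteroclinic} quadratic tangency between $W^s(P')$ and $W^u(P)$ from (\ref{B3}) into a \emph{homoclinic} quadratic tangency of $P'$: since $P$ and $P'$ are homoclinically related by (\ref{B2}), one uses the Inclination Lemma along a transverse heteroclinic orbit from $P'$ to $P$ to push a disk of $W^u(P')$ $C^1$-close to $W^u(P)$ near the tangency point; combined with the transverse intersection $W^s(P')\pitchfork W^u(P)$ one gets, after an arbitrarily small $C^r$-perturbation (a bump-function deformation localized away from the periodic orbits), a genuine quadratic homoclinic tangency of $P'$ at some point $\boldsymbol x_0$. Since the $W^u(P')$-disk approximating $W^u(P)$ is two-dimensional and the tangency is generically positioned, one arranges $\boldsymbol x_0\notin W^{uu}(P')$, which is (\ref{C1}).

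The second and central step is to arrange the strong-unstable tangency condition (\ref{C2}): $W^s(P')$ should be tangent at $\boldsymbol x_0$ to the leaf $\ell^{uu}(\boldsymbol x_0)$ of $\mathcal F^{uu}_{\loc}(P')$, i.e.\ the tangent line of $W^s(P')$ at $\boldsymbol x_0$ (projected to the $(x,y)$-plane along the tangency) should point in the strong-unstable $y$-direction. Here is where the non-real/real contrast built into $\mathscr{B}$ is exploited, exactly in the spirit of the intrinsic-tangency construction of Lemma~\ref{lem:p5c}: the saddle $P$ has \emph{non-real} unstable eigenvalues, so pulling back $W^s(P')$ by iterates of $f^{-\per(P)}$ near $P$ produces disks that spiral, and the directions of their tangent lines near $W^u_{\loc}(P)$ rotate densely (after a preliminary perturbation making the rotation number irrational). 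Transporting this family of spiralling pre-images forward along the transition from $P$ to $P'$ and into the linearizing box $U'$ of $P'$, one can select an iterate at which the transported $W^s(P')$-disk meets $W^u_{\loc}(P')$ with its tangent line aligned with the $y$-axis; a final arbitrarily small $C^r$ bump-perturbation, supported near the relevant point of the orbit, both restores the quadratic tangency at a point $\boldsymbol x_0$ and forces the $y$-alignment, giving (\ref{C2}). One checks this perturbation can be taken to preserve linearity near $P'$ (so the leaf-by-leaf description of $\mathcal F^{uu}_{\loc}(P')$ is unaffected) and to keep $\boldsymbol x_0$ off $W^{uu}(P')$.

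The third step, condition (\ref{C3}), is the transversality of the central-stable plane $S(\bar{\boldsymbol x}_0)$ (spanned by $\partial_x,\partial_z$ at the forward image $\bar{\boldsymbol x}_0=f^{n_0}(\boldsymbol x_0)\in W^s_{\loc}(P')$) with $W^u(P')$ at $\bar{\boldsymbol x}_0$. Since $W^u_{\loc}(P')$ is the $(x,y)$-plane and the strong-unstable direction is $\partial_y$, this amounts to the $W^u(P')$-sheet through $\bar{\boldsymbol x}_0$ having a nonzero $\partial_z$-component in its tangent plane, i.e.\ the sheet is not contained in (nor tangent to) the $(x,y)$-plane at that point; because $\bar{\boldsymbol x}_0$ is a forward image of the \emph{tangency} point and the tangency is quadratic with $\boldsymbol x_0\notin W^{uu}(P')$, this is an open condition that holds after a final generic $C^r$-small adjustment, and it is not in conflict with (\ref{C1})--(\ref{C2}) since those constrain the geometry at $\boldsymbol x_0$ while (\ref{C3}) constrains it at the distinct point $\bar{\boldsymbol x}_0$. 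Collecting the four perturbations (linearization, homoclinic-ization of the tangency, $y$-alignment via the spiralling pre-images, and the (\ref{C3})-adjustment), all of which can be made $C^r$-small and do not destroy one another, yields $g\in\mathscr{C}$ arbitrarily $C^r$-close to $f$, proving $\mathscr{B}\subset\overline{(\mathscr{C})}_{C^r}$.

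\medskip
The main obstacle I expect is the compatibility/simultaneity of steps two and three: the $y$-alignment (\ref{C2}) is a \emph{codimension-two} tangency condition at $\boldsymbol x_0$, and one must show that the spiralling of $W^s(P')$-pre-images near the non-real saddle $P$ really does sweep out the needed one-parameter family of tangent directions in $W^u_{\loc}(P')$ \emph{while simultaneously} the quadratic and off-$W^{uu}$ conditions are kept, and that afterwards (\ref{C3}) can still be achieved by a perturbation localized near $\bar{\boldsymbol x}_0$ without disturbing the already-arranged data at $\boldsymbol x_0$ and near $P'$ — this bookkeeping of nested perturbations, each supported on a disjoint piece of the homoclinic orbit, is the delicate part, and it is essentially the same mechanism used in the proof of Lemma~\ref{lem:p5c}, adapted to the homoclinic setting of $P'$.
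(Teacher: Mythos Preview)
Your proposal identifies the correct mechanism --- use the irrational rotation coming from the non-real unstable eigenvalues at $P$ to align the tangent direction of a piece of $W^s(P')$ with a strong-unstable leaf of $\mathcal F^{uu}(P')$ inside a $W^u(P')$-disk, then perturb to realize the codimension-two tangency (\ref{C1})--(\ref{C2}) --- and this is exactly the engine of the paper's proof. Condition (\ref{C3}) is indeed an open transversality condition and both you and the paper dispatch it in one line.

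However, your organization introduces a genuine confusion about \emph{where} the construction takes place. The paper carries out everything in the linearizing box $U$ of $P$ (the saddle with non-real eigenvalues), not of $P'$: a $W^u(P')$-disk is pushed \emph{forward} through $W^u(P')\pitchfork W^s(P)$ into $U$, producing disks $\hat D^u_n\to W^u_{\loc}(P)$ carrying strong-unstable leaves $\hat\ell^{uu}_n$ with unit tangent $\boldsymbol v^{uu}_n$; simultaneously the (B3)-tangency point on $W^u_{\loc}(P)$ is iterated \emph{backward} into $U$, producing $W^s(P')$-arcs $L^s_{-m}$ whose tangent $\boldsymbol w^s_{-m}$ rotates densely by the irrational $\vartheta$. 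One then chooses $(n_0,m_0)$ so that both points and both directions are $\varepsilon$-close near $P$, and a single two-stage perturbation (a shift of $L^s_{-m_0}$ onto $\hat D^u_{n_0}$, then a small rotation about the normal) realizes (\ref{C1}) and (\ref{C2}) simultaneously.

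Your Step~2, by contrast, speaks of ``transporting the spiralling pre-images forward along the transition from $P$ to $P'$ and into the linearizing box $U'$ of $P'$'' so that the $W^s(P')$-arc ``meets $W^u_{\loc}(P')$'' with $y$-aligned tangent. As written this does not work: forward iterates of a point in $W^s(P')$ accumulate on $W^s_{\loc}(P')$ (the $z$-axis in $U'$), not on $W^u_{\loc}(P')$, so no such meeting occurs; and since $P'$ has real eigenvalues, there is no rotation available in $U'$ to do the alignment there. Relatedly, performing your Step~1 (fix a homoclinic tangency at a specific $\boldsymbol x_0$) \emph{before} Step~2 is backward: the direction alignment must come from the \emph{choice of iterates}, after which a small perturbation creates the tangency with the direction already $\varepsilon$-correct --- exactly the compatibility issue you flag in your final paragraph. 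Once you relocate the argument to a neighborhood of $P$ and merge the two steps, you recover the paper's proof.

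A minor slip in your (\ref{C3}) analysis: transversality of $S(\bar{\boldsymbol x}_0)$ with the returning $W^u(P')$-sheet requires a nonzero $\partial_y$-component (not $\partial_z$) in the sheet's tangent plane at $\bar{\boldsymbol x}_0$; the $\partial_z$-component is automatic because $\bar{\boldsymbol x}_0$ is the forward image of a tangency point with $W^s(P')$.
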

\begin{proof}
Let us consider $f\in \mathscr{B}$ having
saddle periodic points  $P_{f}$ and $P^{\prime}_{f}$ and 
satisfying (\ref{B1})--(\ref{B3}). 
Since the unstable eigenvalues at $P_{f}$ are non-real, 
if necessary perturbing $f$ slightly in a small neighborhood of $P_{f}$ without breaking (\ref{B1})--(\ref{B3}), 
we may assume from the beginning that 
there exist a local chart $(\bar{x}, \bar{y}, \bar{z})$ in a small neighborhood $U$ of $P_{f}$
and  real constants
$ b_{s}, b_{u}, \vartheta$ such that 
\begin{enumerate}
\item  $0<|b_{s}|<1<|b_{u}|$ and $\vartheta\in [0, 1]$ which is an irrational number; 
\item 
$P_{f}=(0,0,0)$ and for any $(\bar{x}, \bar{y}, \bar{z})\in U$,   
 \begin{equation} \label{eq3.1.1}
 f^{\per(P_{f})}(\bar{x}, \bar{y}, \bar{z})=
 \left(
(\bar{x}, \bar{y})\;  {}^t\!B_{u},\  b_{s} \bar{z}
 \right),
\ 
B_{u}= b_{u}\left(\begin{array}{cc}
\cos 2\pi\vartheta & -\sin 2\pi\vartheta \\
\sin 2\pi\vartheta & \cos 2\pi\vartheta \\
\end{array}\right).
\end{equation}
\end{enumerate}

%\begin{equation} \label{eq3.1.1} 
%Df^{\per(P_{f})}(P_{f})
%=\left(\begin{array}{ccc}
%b_{u}\cos 2\pi\vartheta & -b_{u}\sin 2\pi\vartheta  & 0\\
%b_{u}\sin 2\pi\vartheta & b_{u}\cos 2\pi\vartheta & 0\\
%0 & 0 & b_{s}
%\end{array}\right).
%\end{equation}

Let  $\bar{\boldsymbol{x}}_{0}=(0, 0, \bar{z}_{0})$ 
be a point in 
$W^{u}(P_{f}^{\prime})\pitchfork W^{s}_{\loc}(P_{f})$ given  by (\ref{B2}) as shown in Figure \ref{fg4}-(a). Without loss of generality
we may suppose that $\bar{\boldsymbol{x}}_{0}\not\in W^{uu} (P_{f}^{\prime})$.  
Define 
$$\bar{\boldsymbol{x}}_{n}:=f^{n\per(P_{f})} (\bar{\boldsymbol{x}}_{0})$$ 
for any  integer $n>0$, see Figure \ref{fg4}-(b).
%%%%%%%%%%%%%%%%%%%%%%%%%%%%%%%%%%%%%%%
\begin{figure}[hbt] 
\centering
\scalebox{0.86}{\includegraphics[clip]{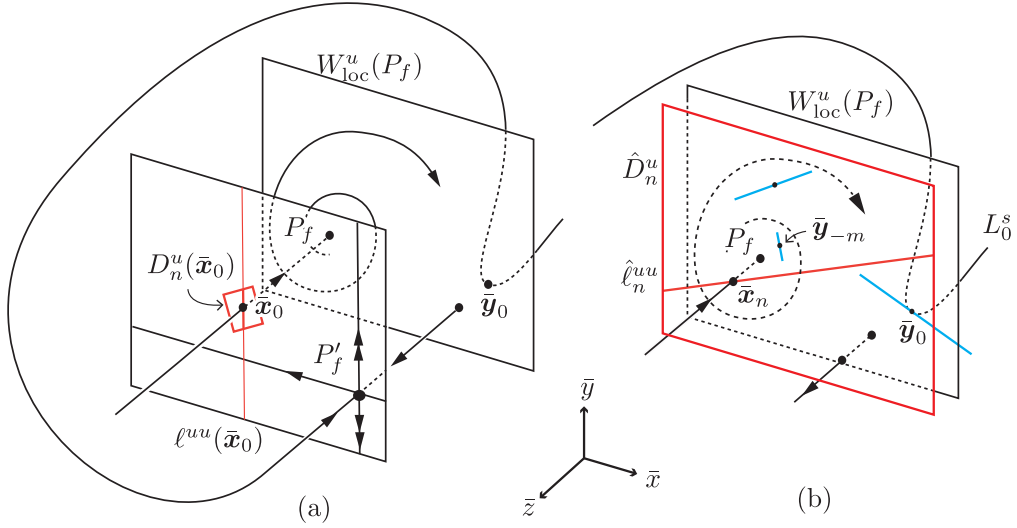}}
\caption{Non-transverse equidimensional cycle with rotation}
\label{fg4}
\end{figure}
%%%%%%%%%%%%%%%%%%%%%%%%%%%%%%%%%%%%%%%
By the Inclination Lemma,  for a large $n$, there is a two-dimensional disk 
$D_{n}^{u}(\bar{\boldsymbol{x}}_{0})\subset W^{u}(P_{f}^{\prime})$ 
containing $\bar{\boldsymbol{x}}_{0}$ 
such that 
$f^{n\per(P_{f})} (D_{n}^{u}(\bar{\boldsymbol{x}}_{0}) )$ 
converges to 
$W^{u}_{\loc}(P_{f})$ in the $C^{r}$ topology as $n\to \infty$. 
Write
$$\hat{D}_{n}^{u}:=
f^{n\per(P_{f})} (D_{n}^{u}(\bar{\boldsymbol{x}}_{0}) )$$
for each $n>0$. 
Note that 
there is a segment  $\ell^{uu}_{n}(\bar{\boldsymbol{x}}_{0})$ contained in the 
the leaf through $\bar{\boldsymbol{x}}_{0}$
of the strong unstable foliation $\mathcal{F}^{uu}(P^{\prime}_{f})$,  
which is carried to $\hat{D}_{n}^{u}$ by $f^{n \per(P_{f})}$.
For each $n>0$,  define 
$$\hat{\ell}_{n}^{uu}:= f^{n \per(P_{f})}(\ell^{uu}(\bar{\boldsymbol{x}}_{0})).$$
%Let  $L$ be a $C^{1}$ curve contained in $W^{u}_{\loc}(P_{f})$ through $P_{f}$. 
See Figure \ref{fg4}-(b). 
Let us take the following steps:

%In the next step, 
%we estimate the $C^{1}$ distance between $\hat{\ell}_{n}^{uu}$ and $L$ in a small neighborhood of $P_{f}$.
 \smallskip
 
\noindent
\textbf{Step 1:}
Let $\boldsymbol{v}_n^{uu}$ be a unit vector tangent to $\hat{\ell}_n^{uu}$ at $\bar{\boldsymbol{x}}_n$.
Since the sequence $\{\bar{\boldsymbol{x}}_n\}$ converges to $P_f$ in $M$ as $n\to \infty$, 
we have a subsequence $\{\boldsymbol{v}_{n_i}^{uu}\}$ of $\{\boldsymbol{v}_n^{uu}\}$ converging to 
a unit vector $\boldsymbol{v}_\infty\in %T_{P_f}(M)
T_{P_f} W^u_{\loc}(P_f)
$ as $n_i\to \infty$ in the tangent vector space 
$TM$.
This implies that, for any $\varepsilon>0$, there is an integer $n_{0}:=n_{i_{0}}>0$ such that 
\begin{equation}\label{eq3.1.2}
\left|P_{f}- \bar{\boldsymbol{x}}_{n_{0}}\right|<\varepsilon/2,\quad  
\mathrm{dist}_{TM}(\boldsymbol{v}_\infty, \boldsymbol{v}_{n_0}^{uu})<\varepsilon/2.
\end{equation}
%where 
%$\pi^{u}:U\to W^{u}_{\loc}(P_{f})$ is the canonical projection 
%with $\pi^{u}(\bar{x}, \bar{y}, \bar{z})=(\bar{x}, \bar{y},0)$.  
This finishes the first step.
 \hspace*{ \fill}$\blacksquare$
 \smallskip

Next,  we consider a quadratic tangency  $\bar{\boldsymbol{y}}_{0}$ between 
$W^{s}(P_{f}^{\prime})$ and $ W^{u}_{\loc}(P_{f})$ which is given by the condition (\ref{B3}).
For every integer $m\geq 0$, we write 
$$\bar{\boldsymbol{y}}_{-m}:=f^{-m \per(P_{f})}(\bar{\boldsymbol{y}}_{0}).$$
See Figure \ref{fg4}-(b). 
%In the next step, 
%we estimate $C^{1}$ distance between the $L$ above 
%and the tangent space of $W^{s}(P^{\prime}_{f})$ 
%at $\bar{\boldsymbol{y}}_{-m}$ with a large $m$
%in a small neighborhood of $P_{f}$.
 \smallskip
 
\noindent
\textbf{Step 2:} 
Let $L^{s}_{-m}$ be a subarc of $W^{s}(P_{f}^{\prime})$ passing through 
$\bar{\boldsymbol{y}}_{-m}$ for any integer $m\geq 0$. 
Consider a unit vector $\boldsymbol{w}_{-m}^s$ tangent to $L^{s}_{-m}$ 
at $\bar{\boldsymbol{y}}_{-m}$.
Since $|b_u|>1$ in (\ref{eq3.1.1}), $\{\bar{\boldsymbol{y}}_{-m}\}$ converges to 
$P_f$ in $M$ as $m\to \infty$.
Since moreover $\vartheta$ is irrational, there exists a subsequence $\{m_i\}$ of $\{m\}$ such that 
$\{\boldsymbol{w}_{-m_i}^s\}$ converges to $\boldsymbol{v}_\infty$ in $TM$.
This implies that, for any $\varepsilon>0$, 
there is an integer $m_{0}=m_{i_{0}}>0$ such that 
\begin{equation}\label{eq3.1.3}
\left|P_{f}- \bar{\boldsymbol{y}}_{-m_{0}}\right|<\varepsilon/2,
\quad  
\mathrm{dist}_{TM}(\boldsymbol{v}_\infty, \boldsymbol{w}_{-m_0}^s)<\varepsilon/2.
\end{equation}
This ends the second step.
 \hspace*{ \fill}$\blacksquare$

In the next final step, we combine the above two steps.

\noindent
\textbf{Step 3:}
From (\ref{eq3.1.2}) and (\ref{eq3.1.3}), we have
\begin{equation}\label{eq3.1.4}
\left|\bar{\boldsymbol{x}}_{n_{0}}- \bar{\boldsymbol{y}}_{-m_{0}}\right|<\varepsilon,
\quad 
\mathrm{dist}_{TM}(\boldsymbol{v}_{n_0}^{uu}, \boldsymbol{w}_{-m_0}^s)<\varepsilon.
\end{equation}
See Figure \ref{fg5}-(a). 
This concludes the third step.
 \hspace*{ \fill}$\blacksquare$
\smallskip

%%%%%%%%%%%%%%%%%%%%%%%%%%%%%%%%%%%%%%%
\begin{figure}[hbt] 
\centering
\scalebox{0.88}{\includegraphics[clip]{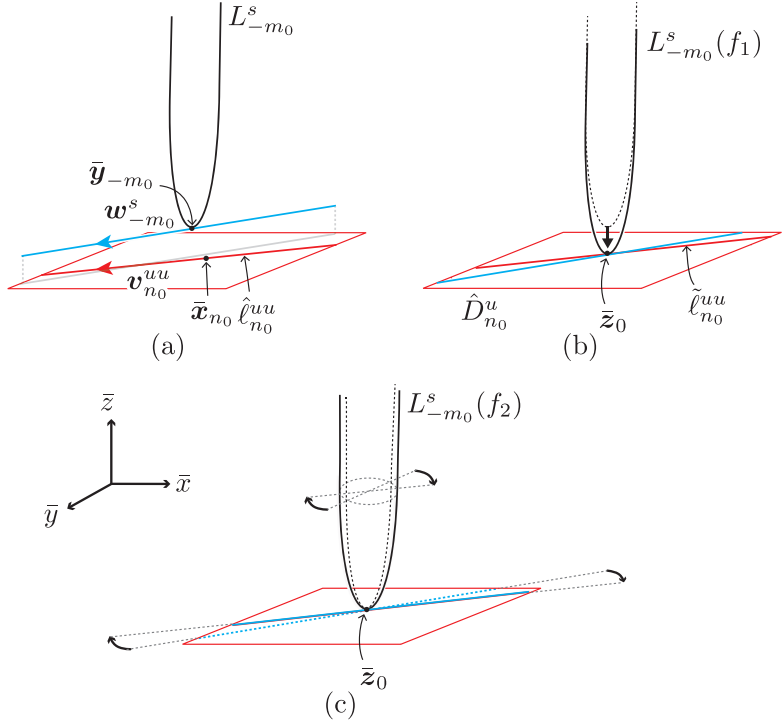}}
\caption{The advent of tangency of codimension two }
\label{fg5}
\end{figure}
%%%%%%%%%%%%%%%%%%%%%%%%%%%%%%%%%%%%%%%

Now we add a small perturbation to $f$ on a  neighborhood of $\bar{\boldsymbol{y}}_{0}$ 
to obtain a $C^r$ diffeomorphism $f_{1}$ which is $C^{r}$-close to $f$ and 
such that 
the continuation $L^{s}_{-m_{0}}(f_{1})$ is obtained from $L^{s}_{-m_{0}}$ 
by a ``shifting down'' operation along the $\bar{z}$-axis and has a quadratic tangency $\bar{\boldsymbol{z}}_{0}$ with $\hat{D}_{n_{0}}^{u}$. 
See Figure  \ref{fg5}-(b).
Let $\tilde{\ell}_{n_0}^{uu}$ be a curve in $\hat D_{n_0}^u$ passing through $\bar{\boldsymbol{z}}_{0}$ 
and such that $f_1^{-n_0 \mathrm{Per}(P_{f_1})}(\tilde{\ell}_{n_0}^{uu})$ is 
contained in one of leaves of $\mathcal{F}^{uu}(P_{f_1}')$.
In general, 
$T_{\bar{\boldsymbol{z}}_{0}} L^{s}_{-m_{0}}(f_{1})$ does not coincide with $T_{\bar{\boldsymbol{z}}_{0}} \tilde{\ell}^{uu}_{n_{0}}$.
However, the condition (\ref{eq3.1.4}) implies that these spaces are sufficiently close to each other. 
Hence, by adding a perturbation again to $f_{1}$ on the neighborhood of $\bar{\boldsymbol{y}}_{0}$, we 
obtain a $C^r$ diffeomorphism $f_{2}$ which is $C^{r}$-close to $f_{1}$ 
and such that the continuation $L^{s}_{-m_{0}}(f_{2})$ of $L^{s}_{-m_{0}}(f_{1})$ has a quadratic 
tangency with $\hat{D}_{n_0}^u$ at $\bar{\boldsymbol{z}}_{0}$ and satisfies
$$T_{\bar{\boldsymbol{z}}_{0}} L^{s}_{-m_{0}}(f_{2})=T_{\bar{\boldsymbol{z}}_{0}} \tilde{\ell}^{uu}_{n_{0}},$$
see Figure \ref{fg5}-(c).
More precisely, $L^{s}_{-m_{0}}(f_{2})$ is obtained from $L^{s}_{-m_{0}}(f_{1})$ by a small rotation 
around the axis meeting $\hat D_{n_0}^u$ orthogonally at $\bar{\boldsymbol{z}}_{0}$.

At last we have obtained 
the $C^{r}$  diffeomorphism $g:=f_{2}$ which is  $C^{r}$-near $f$  and  such that 
\begin{itemize}
\item 
$W^{s}(P^{\prime}_{g})$ and 
$\hat{D}_{n_{i_{0}}}^{u}$
have a quadratic tangency at $\bar{\boldsymbol{z}}_{0}$;
\item 
moreover, at the point $\bar{\boldsymbol{z}}_{0}$, 
$W^{s}(P_{g}^{\prime})$ and  $\tilde{\ell}^{uu}_{n_{0}}$ 
have a tangency with 
$$
c_{\bar{\boldsymbol{z}}_{0}}(W^{s}(P_{g}^{\prime}),  \tilde{\ell}^{uu}_{n_{0}})=2.
$$ 
\end{itemize}
This implies the condition (\ref{C2}). 
Note that, since $\bar{\boldsymbol{x}}_{0}\not\in W^{uu} (P_{f}^{\prime})$, 
the quadratic tangency  $\bar{\boldsymbol{z}}_{0}$
does not belong to  $W^{uu} (P_{f}^{\prime})$. 
It implies that  (\ref{C1}) holds for $g$. 
Moreover, since (\ref{C3}) is the condition about the transversality, it still holds
after arbitrarily small perturbations of $f$. 
Consequently, $g$ belongs to $\mathscr{C}$. 
This completes the proof of Proposition \ref{thm3.1}. 
\end{proof}

In the end of this section,  
we present the next lemma which is indispensable for our discussion in the final section.
Since this is just an extract from the main result of \cite{Tj01}, 
we here skip the proof. 

\begin{lemma}[see the items 1--3 in {\cite[Theorem 1]{Tj01}}]\label{lem3.2}
Let $f$ be a $C^{r}$ $(r\geq 2)$  diffeomorphism on a  three-dimensional manifold $M$ 
which has a homoclinic tangency of a saddle periodic point $P^{\prime}$ 
with real eigenvalues $\lambda_{1}, \lambda_{2}, \lambda_{3}$ 
satisfying $|\lambda_{1}|<1<|\lambda_{2}|<|\lambda_{3}|$. 
In addition, suppose that the homoclinic tangency satisfies the Tatjer condition. 
Then 
there are  
a two-parameter family $\{f_{a, b}\}_{a,b\in \mathbb{R}}$ with $f_{0,0}=f$ 
and 
a sequence $\{(a_{n}, b_{n})\}_{n\in\mathbb{N}}$ of the parameter values with $(a_{n}, b_{n})\to (0,0)$ as $n\to \infty$
such that, 
for any sufficiently large $n$, 
$f_{a_{n}, b_{n}}$  has an $n$-periodic smooth attracting invariant circle.
\hfill$\square$
\end{lemma}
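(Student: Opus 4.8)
The plan is to derive the statement by following the renormalization scheme for homoclinic tangencies with the Tatjer condition, which is the content of \cite{Tj01}. First I would fix the $C^{r}$ linearizing coordinates near $P^{\prime}$ as in the discussion preceding (\ref{C1}) and choose a transition map $\Phi$ from a neighbourhood of the tangency point $\boldsymbol{x}_{0}$ to a neighbourhood of $\bar{\boldsymbol{x}}_{0}=f^{n_{0}}(\boldsymbol{x}_{0})\in W^{s}_{\loc}(P^{\prime})$ along the homoclinic orbit. A two-parameter family $\{f_{a,b}\}_{a,b}$ with $f_{0,0}=f$ is then produced by post-composing $f$ with a small translation near $\bar{\boldsymbol{x}}_{0}$: the parameter $a$ controls the splitting of $W^{s}(P^{\prime})$ and $W^{u}(P^{\prime})$, i.e.\ it unfolds the quadratic tangency of (\ref{C1}), while $b$ controls the splitting of $W^{s}(P^{\prime})$ from the strong unstable leaf $\ell^{uu}(\boldsymbol{x}_{0})$, i.e.\ it unfolds the codimension-two degeneracy of (\ref{C2}). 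The transversality condition (\ref{C3}) ensures that $(a,b)\mapsto f_{a,b}$ is a genuine two-parameter unfolding, so that the two degeneracies are broken independently.

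Next I would study the return maps $R_{n}:=f_{a,b}^{N_{n}}$ near $\boldsymbol{x}_{0}$, where $N_{n}=n\,\per(P^{\prime})+n_{0}$, acting on a small box around $\boldsymbol{x}_{0}$ and written as $\Phi\circ f^{n\per(P^{\prime})}$. Expanding in the linearizing coordinates, the leading behaviour of $R_{n}$ is governed by $\lambda_{1}^{n},\lambda_{2}^{n},\lambda_{3}^{n}$; after an affine rescaling of the coordinates and a reparametrization $(a,b)=(\alpha_{n}(\tilde a),\beta_{n}(\tilde b))$ with $\alpha_{n},\beta_{n}\to 0$, the family $R_{n}$ converges in the $C^{r}$ topology on compact sets to a limiting three-dimensional H\'enon-like family. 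Under the Tatjer condition (type I of case B) this limit reduces, on its two-dimensional central direction, to a one-parameter perturbation of a map undergoing a degenerate Neimark--Sacker bifurcation — a Bogdanov--Takens organizing centre — whose unfolding contains, for parameters in an open region, a hyperbolic attracting invariant circle bifurcating off a fixed point. I would then pull this back: for each sufficiently large $n$ choose $(\tilde a,\tilde b)$ in that region and set $(a_{n},b_{n})=(\alpha_{n}(\tilde a),\beta_{n}(\tilde b))$. By $C^{1}$-persistence of normally hyperbolic invariant manifolds applied to the convergent family $R_{n}$, the map $f_{a_{n},b_{n}}^{N_{n}}$ has, for large $n$, an attracting invariant circle $C_{n}$ close to the rescaled limit circle; since the normal contraction dominates the tangential dynamics and $f_{a_{n},b_{n}}$ is $C^{r}$, the circle $C_{n}$ is in fact a $C^{r}$ curve. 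The circles $C_{n},f_{a_{n},b_{n}}(C_{n}),\dots,f_{a_{n},b_{n}}^{N_{n}-1}(C_{n})$ lie in pairwise disjoint portions of the linearizing chart and of the transition region, so $C_{n}$ is a periodic attracting invariant circle; reindexing the period gives the asserted $n$-periodic circle, and $(a_{n},b_{n})\to(0,0)$ gives $f_{a_{n},b_{n}}\to f$.

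I expect the main obstacle to be the second step: identifying the precise rescaled limit family and verifying that conditions (\ref{C1})--(\ref{C3}) are exactly the non-degeneracy hypotheses placing the limit at the Bogdanov--Takens organizing centre rather than in one of the other cases of Tatjer's classification. This requires careful bookkeeping of the jet of the transition map $\Phi$ at $\bar{\boldsymbol{x}}_{0}$ — in particular how the $(\frac{\partial}{\partial x})_{\bar{\boldsymbol{x}}_{0}}$ and $(\frac{\partial}{\partial z})_{\bar{\boldsymbol{x}}_{0}}$ directions spanning the plane $S(\bar{\boldsymbol{x}}_{0})$ interact with the image of the strong unstable foliation $\mathcal{F}^{uu}_{\loc}(P^{\prime})$ — and this is precisely where the analytic content of \cite{Tj01} is concentrated; for this reason we take Lemma \ref{lem3.2} as given rather than reproducing the computation here.
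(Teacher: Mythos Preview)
The paper does not prove Lemma \ref{lem3.2} at all: it is stated as a direct extract of items 1--3 of \cite[Theorem 1]{Tj01}, the proof is explicitly skipped, and the lemma is closed with a $\square$. Your proposal is therefore not competing with a proof in the paper but rather supplying a sketch of Tatjer's own argument --- renormalization of the return map near the tangency, rescaling to a limit family, identification of the Bogdanov--Takens organizing centre, and persistence of the attracting circle --- which the authors deliberately omit. Since you too conclude by deferring to \cite{Tj01} for the hard analytic step, your treatment and the paper's are in the end the same: both take the lemma as a black box from Tatjer. Your outline of the mechanism is accurate and more informative than what the paper provides, but it is commentary on the cited reference rather than an alternative proof.
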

\noindent
We remark that the attracting invariant circles in Lemma \ref{lem3.2} are really generated by the Hopf  (also known as the Neimark-Sacker) bifurcation for three-dimensional diffeomorphisms,  
which is a part of the codimension-two bifurcation
called Bogdanov-Takens, see 
Broer et al \cite{BRS96}, and also \cite[\S 2.4]{Tj01}. 
\begin{add}[about strange attractors/infinitely many sinks]
\normalfont 
By using other results in \cite[Theorem 1]{Tj01}, one can show that
any element of $\mathscr{C}$ can be approximated by diffeomorphisms having strange attractors or 
infinitely many sinks. However, by \cite[Theorem C]{KS12} together with \cite[Corollary B]{KNS10}, 
both the phenomena can be directly derived from the existence of intrinsic tangencies in the proof of Proposition \ref{prop2.1}
without detouring to the constructing of generalized homoclinic tangencies.
That is, any element of $\mathscr{A}$ can be approximated by diffeomorphisms having these phenomena. 
\hfill$\square$
\end{add}

\section{Constructions of wandering domains}\label{sec4}
Let $\mathscr{D}$ be the set of all  $C^{r}$ 
diffeomorphisms on a three-dimensional manifold $M$
 having  smooth attracting  invariant circles 
which are given by the Hopf  bifurcation
as in the conclusion of Lemma \ref{lem3.2}.  
Here the regularity $r$ should be at least $5$ so that 
it can be expressed by the following normal form (\ref{nf}) of the 
Hopf bifurcation, see \cite[\S 7.5]{Rb}. 
More specifically, for any $f\in \mathscr{D}$, 
there exists a one-parameter family $\{f_{\mu}\}_{\mu\in\mathbb{R}}$ of $C^{r}$ diffeomorphisms
such that
 $f_{0}$ is arbitrarily $C^{5}$-close to $f$ and 
 $f_{\mu}$ undergoes the generic 
Hopf bifurcation at an $n$-periodic point $\boldsymbol{p}$ for some integer $n\geq 0$ for $\mu=0$ 
 which creates an attracting  invariant circle.
In fact,  one has 
polar coordinates $(r, \theta)$ and  
a real coordinate $t$ in a small neighborhood of $\boldsymbol{p}$
such that  $\boldsymbol{p}=(0,0,0)$ and 
\begin{equation}\label{nf}
f_{\mu}^{n}(r, \theta, t)=
\left(
(1+\mu)r-a_{\mu} r^{3}+O_{\mu}(r^{4}),\ 
\theta+\beta_{\mu}+O_{\mu}(r^{2}),\ 
\gamma t 
\right)
\end{equation}
where $O_{\mu}(r^{k})$ is a smooth function of order $r^{k}$ with $k\in\mathbb{N}$ near $(r,\mu)=(0,0)$ 
which depends on $\mu$ smoothly, 
$a_{\mu}$, $\beta_{\mu}$ are real constants depending on $\mu$ smoothly with $a_{0}> 0$, and 
$\gamma$ is a real constant with $0<|\gamma|<1$.
Notice that $f$ restricted to the $r\theta$-space has the same form as 
the normal form of the original two-dimensional Hopf bifurcation, see \cite[\S 7.5]{Rb} 
or \cite[\S\S 4.6--4.7]{Kuz} for more details.
Observe that,  
as $\mu>0$, 
this form has a saddle periodic point at $\boldsymbol{p}=(0,0,0)$ 
and has an attracting invariant circle surrounding $\boldsymbol{p}$
of radius $(\mu a_{\mu}^{-1})^{1/2} +O(\mu)$.

Using the notations $\mathscr{C}$ and $\mathscr{D}$, 
one can rewrite Lemma \ref{lem3.2} as follows: 
\begin{corollary}\label{coro3.3}
$\mathscr{C}$ is contained in the $C^{1}$ closure of $\mathscr{D}$.
\end{corollary}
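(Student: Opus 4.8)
The plan is to read Corollary~\ref{coro3.3} as nothing more than a translation of Lemma~\ref{lem3.2} into the language of the classes $\mathscr{C}$ and $\mathscr{D}$, so that the actual work reduces to checking two points: that the diffeomorphisms produced by Lemma~\ref{lem3.2} genuinely lie in $\mathscr{D}$, and that they converge in the $C^{1}$ topology to the given element of $\mathscr{C}$.

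First I would fix an arbitrary $f\in\mathscr{C}$. By definition $f$ has a homoclinic tangency of a saddle periodic point $P^{\prime}$ satisfying the Tatjer condition, and in particular the derivative of $f$ at $P^{\prime}$ has real eigenvalues $\lambda_{s},\lambda_{cu},\lambda_{u}$ with $|\lambda_{s}|<1<|\lambda_{cu}|<|\lambda_{u}|$, so the hypotheses of Lemma~\ref{lem3.2} are met (with $r$ taken large enough, say $r\geq 5$, as is assumed throughout this section). Applying Lemma~\ref{lem3.2} furnishes a two-parameter family $\{f_{a,b}\}_{a,b\in\mathbb{R}}$ with $f_{0,0}=f$ and a sequence $(a_{n},b_{n})\to(0,0)$ such that, for every sufficiently large $n$, the diffeomorphism $f_{a_{n},b_{n}}$ carries an $n$-periodic smooth attracting invariant circle.

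Next I would verify that $f_{a_{n},b_{n}}\in\mathscr{D}$ for all large $n$. Here I would invoke the Remark following Lemma~\ref{lem3.2}, together with \cite[\S 2.4]{Tj01} and Broer et al.\ \cite{BRS96}: the attracting circle in question is born through a generic Hopf (Neimark--Sacker) bifurcation sitting inside the Bogdanov--Takens unfolding. Concretely, for each fixed large $n$ one extracts from $\{f_{a,b}\}$ a one-parameter path crossing the corresponding bifurcation value, reparametrises it as $\{f_{\mu}\}_{\mu\in\mathbb{R}}$ with $f_{0}$ arbitrarily $C^{5}$-close to $f_{a_{n},b_{n}}$, and brings it into the normal form~(\ref{nf}) in polar-type coordinates around the relevant $n$-periodic point; the supercriticality coefficient $a_{0}>0$ is exactly the genericity condition already supplied by Tatjer's construction. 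This is precisely the defining property of $\mathscr{D}$, so $f_{a_{n},b_{n}}\in\mathscr{D}$.

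Finally, since the family $\{f_{a,b}\}$ can be taken to depend continuously (in fact smoothly) on $(a,b)$ in the $C^{r}$ topology and $(a_{n},b_{n})\to(0,0)$, we obtain $f_{a_{n},b_{n}}\to f_{0,0}=f$ in the $C^{r}$, hence $C^{1}$, topology. Therefore $f$ lies in the $C^{1}$ closure of $\mathscr{D}$, and as $f\in\mathscr{C}$ was arbitrary, $\mathscr{C}\subset\overline{(\mathscr{D})}_{C^{1}}$. The only step requiring genuine care --- and it is largely subsumed in the cited literature --- is the middle one: confirming that Tatjer's invariant circles are not merely present but arise as the output of a one-parameter generic Hopf bifurcation admitting the normal form~(\ref{nf}), so that they actually witness membership in $\mathscr{D}$ as we have defined it; everything else is a formal consequence of Lemma~\ref{lem3.2}.
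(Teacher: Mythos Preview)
Your proposal is correct and follows exactly the paper's approach: the paper simply states that Corollary~\ref{coro3.3} is a rewriting of Lemma~\ref{lem3.2} in terms of the classes $\mathscr{C}$ and $\mathscr{D}$, with no further proof, and your write-up is the natural unpacking of that sentence. The one point you flag as needing care---that the invariant circles produced by Lemma~\ref{lem3.2} arise via a generic Hopf bifurcation and hence witness membership in $\mathscr{D}$---is precisely what the Remark following Lemma~\ref{lem3.2} (citing \cite{BRS96} and \cite[\S2.4]{Tj01}) is there to guarantee, so your handling of it is appropriate.
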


The next result
is the final step for the proof of Theorem \ref{thm1}.

\begin{proposition}\label{prop4.1}
Let $f$ be a diffeomorphism contained in $\mathscr{D}$. 
There is a diffeomorphism $g$ arbitrarily $C^{1}$-close to $f$ such that 
 $g$ has a contracting non-trivial wandering domain $D$ 
 for which 
 $\omega(D, g)$ is a  transitive nonhyperbolic Cantor set without periodic points. 
\end{proposition}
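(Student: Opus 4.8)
The plan is to carry out step~4 of the introduction: I bring $f$ to the Hopf normal form near its periodic point, recognise the attracting invariant circle as a normally hyperbolic circle, and perform a $C^{1}$ Denjoy-type surgery inside a normal tubular neighbourhood of that circle. First, since $f\in\mathscr D$, Lemma~\ref{lem3.2} and the discussion of \eqref{nf} furnish, arbitrarily $C^{1}$-close to $f$, a $C^{r}$ diffeomorphism (still denoted $f$, namely $f_{\mu}$ with $\mu>0$ small), an integer $n\ge 0$, an $n$-periodic point $\boldsymbol p$, and coordinates $(r,\theta,t)$ near $\boldsymbol p$ in which $f^{n}$ has the form \eqref{nf}; then $\mathcal C:=\{r=\rho_{0},\ t=0\}$ with $\rho_{0}=(\mu a_{\mu}^{-1})^{1/2}+O(\mu)$ is an attracting invariant circle of $f^{n}$. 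By \eqref{nf} its transverse multipliers, $1-2\mu+o(\mu)$ and $\gamma$, have modulus $<1$, while the tangential derivative of $\phi:=f^{n}|_{\mathcal C}$ is $1+O(\mu)$; hence $\mathcal C$ is a $C^{1}$-normally hyperbolic attracting circle (the classical Neimark--Sacker situation, cf.~\cite{BRS96}). Therefore $f^{n}$ has a $C^{1}$ invariant strong-stable foliation $\mathcal F^{ss}$ of a small tubular neighbourhood $\mathcal N$ of $\mathcal C$ with $2$-dimensional leaves $W^{ss}(\theta)$ uniformly contracted and with $f^{n}(W^{ss}(\theta))\subset W^{ss}(\phi(\theta))$; using $\mathcal F^{ss}$ I write $\mathcal N\cong\mathcal C\times D^{2}$ so that $f^{n}(\theta,v)=(\phi(\theta),\Psi_{\theta}(v))$ with $\{\Psi_{\theta}\}$ a uniform family of contractions of $D^{2}$. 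Shrinking $\mathcal N$, I may assume $f^{n}(\mathcal N)\subset\mathcal N$ and that $\mathcal N,f(\mathcal N),\dots,f^{n-1}(\mathcal N)$ are pairwise disjoint, and after a $C^{\infty}$-small twist supported near $\mathcal C$ I may assume the rotation number $\alpha$ of $\phi$ is irrational, so that $\phi$ is minimal with no periodic points.

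\emph{The Denjoy-type surgery — the main obstacle.} Fix $\theta_{0}\in\mathcal C$, put $\theta_{k}=\phi^{k}(\theta_{0})$, and choose lengths $\ell_{k}=\varepsilon(|k|+N)^{-2}$ ($k\in\mathbb Z$) with $N$ large and $\varepsilon$ small, so that $\sum_{k}\ell_{k}$ is small, $\ell_{k}\to 0$, and $\ell_{k+1}/\ell_{k}$ is uniformly close to $1$. Blowing up each $\theta_{k}$ into an interval of length $\ell_{k}$ (the classical Denjoy--Bohl construction, cf.~\cite{Bo16,D32}) replaces $\phi$ by a $C^{1}$ circle diffeomorphism $\widetilde\phi$ of $\mathcal C$, $C^{1}$-close to $\phi$, with the same irrational rotation number, with a pairwise disjoint family $\{I_{k}\}$ of wandering intervals $\widetilde\phi(I_{k})=I_{k+1}$, $\operatorname{length}(I_{k})=\ell_{k}$, and with minimal Cantor set $K:=\mathcal C\setminus\bigcup_{k}\operatorname{int}I_{k}$ without periodic points; by Denjoy's theorem $\widetilde\phi$ is necessarily only $C^{1}$, which is exactly why the approximation in this step is merely $C^{1}$. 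Transporting this surgery into $\mathcal N\cong\mathcal C\times D^{2}$ along $\mathcal F^{ss}$, a $C^{1}$-small perturbation of $f$ supported in $\mathcal N$ yields a $C^{1}$ diffeomorphism $g$ with $g=f$ off $\mathcal N$, $g^{n}(\mathcal N)=f^{n}(\mathcal N)$, and $g^{n}(\theta,v)=(\widetilde\phi(\theta),\widetilde\Psi_{\theta}(v))$ on $\mathcal N$, with $\{\widetilde\Psi_{\theta}\}$ again uniform contractions; $g$ is arbitrarily $C^{1}$-close to the original $f$. The delicate point is to realise this surgery simultaneously (i) $C^{1}$-small, (ii) genuinely $C^{1}$ (the obstruction to $C^{2}$), (iii) equal to $f$ near $\partial\mathcal N$ so that it glues to a global diffeomorphism, and (iv) of the above skew-product form, i.e. preserving $\mathcal F^{ss}$, so that the disjointness below holds — controlling all four at once is the technical core of the argument.

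\emph{The wandering domain.} Set $D:=\operatorname{int}I_{0}\times\tfrac12 D^{2}\subset\mathcal N$, a non-empty connected open subset of $M$. Writing $i=jn+k$ with $j\ge 0$, $0\le k<n$, the skew-product form and the fibre contraction give $g^{jn}(D)\subset\operatorname{int}I_{j}\times D^{2}$, hence $g^{i}(D)=g^{k}(g^{jn}(D))\subset f^{k}(\mathcal N)$. Thus, for $i\ne i'=j'n+k'$: if $k\ne k'$ then $g^{i}(D)$ and $g^{i'}(D)$ lie in the disjoint sets $f^{k}(\mathcal N)$, $f^{k'}(\mathcal N)$; if $k=k'$ (so $j\ne j'$) then $g^{i}(D)=g^{k}(g^{jn}(D))$ and $g^{i'}(D)=g^{k}(g^{j'n}(D))$ are disjoint because $g^{jn}(D)\subset\operatorname{int}I_{j}\times D^{2}$ and $g^{j'n}(D)\subset\operatorname{int}I_{j'}\times D^{2}$ are disjoint (the $I_{m}$ being pairwise disjoint, as $\alpha\notin\mathbb Q$) and $g^{k}$ is injective. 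Moreover $\operatorname{diam}g^{jn}(D)\to 0$ ($D^{2}$-factor contracted, $\operatorname{length}I_{j}=\ell_{j}\to 0$), hence $\operatorname{diam}g^{i}(D)\to 0$: $D$ is a contracting wandering domain. For $\boldsymbol x\in D$ the $D^{2}$-coordinate of $g^{jn}(\boldsymbol x)$ tends to $0$ while its $\mathcal C$-coordinate, being the forward orbit under $\widetilde\phi$ of a point in a gap of a Denjoy map, has $\omega$-limit equal to $K$; hence $\omega(\boldsymbol x,g^{n})=K\times\{0\}$ for every $\boldsymbol x\in D$, and
\[
\omega(D,g)=\bigcup_{k=0}^{n-1}g^{k}(K\times\{0\})=\bigcup_{k=0}^{n-1}f^{k}(K\times\{0\})=:\widehat K,
\]
a finite disjoint union of Cantor sets, hence a Cantor set, and $g$-invariant. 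Since $\widetilde\phi|_{K}$ is minimal without periodic points, $g|_{\widehat K}$ is transitive and contains no periodic orbit; in particular $\omega(D,g)$ is not a single periodic orbit, so $D$ is non-trivial. Finally $\widehat K$ is nonhyperbolic: it is contained in the union of the attracting invariant circles $\mathcal C,\dots,f^{n-1}(\mathcal C)$, whose transverse $2$-planes are uniformly contracted, while along these circles $g$ acts near $\widehat K$ as a circle diffeomorphism with irrational rotation number — a neutral direction that can be neither uniformly contracted (a uniform contraction of $\mathcal C$ would have a fixed point, contradicting minimality of $\widetilde\phi|_{K}$) nor uniformly expanded ($\widetilde\phi|_{K}$ is a finite-to-one extension of the zero-entropy rotation $R_{\alpha}$, whereas a uniform expansion of an infinite compact set has positive entropy), so no $Dg$-invariant hyperbolic splitting can exist along $\widehat K$. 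This produces a diffeomorphism $g$ as required; together with Corollaries~\ref{co1},~\ref{coro:p5c},~\ref{coro3.3} and the preceding propositions it completes the proof of Theorem~\ref{thm1}.
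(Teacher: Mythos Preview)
Your proof is correct and follows essentially the same Denjoy-surgery strategy as the paper: pass to the Hopf normal form \eqref{nf}, obtain the attracting invariant circle with irrational rotation, perform a $C^{1}$ Denjoy construction on that circle, and take a normal tubular neighbourhood of a wandering arc as the domain $D$. The only noteworthy difference is that the paper first truncates the higher-order terms of \eqref{nf} (obtaining the simplified form \eqref{rt}) so that the restriction to the circle is literally a rigid irrational rotation before applying Denjoy, whereas you instead invoke normal hyperbolicity of the circle to produce a $C^{1}$ skew-product structure $g^{n}(\theta,v)=(\widetilde\phi(\theta),\widetilde\Psi_{\theta}(v))$ and carry out the surgery there; both routes are valid and yield the same conclusion, with your version giving a somewhat more careful treatment of the disjointness of the iterates $g^{i}(D)$ and of the nonhyperbolicity of $\omega(D,g)$.
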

\begin{proof}
For $f\in \mathscr{D}$, 
one has a $C^r$ diffeomorphism $f_{\mu}$ arbitrarily $C^{1}$-close to $f$ 
such that  $f_{\mu}^{n}$ given by (\ref{nf}) has an attracting invariant circle $S$ 
with $f_{\mu}^{i}(S)\cap S=\emptyset$ for any $i=0,\cdots, n-1$.
By perturbing $f_\mu$ slightly if necessary,
we may assume that $\beta_{\mu}/(2\pi)\not\in\mathbb{Q}$. 
Moreover, since $\mu$ is close to $0$, we 
have a diffeomorphism $\tilde{f}$ $C^{1}$-near $f_{\mu}$ such that 
\begin{equation}\label{rt}
\tilde{f}^{n}(r, \theta, t)=
\left(
(1+\mu)r-a_{\mu} r^{3},\ 
\theta+\beta_{\mu},\ 
\gamma t 
\right)
\end{equation}
in the neighborhood of  $\boldsymbol{p}$ of radius $2(\mu a_{\mu}^{-1})^{1/2}$. 
By this it follows that there is an attracting invariant circle $\tilde S$ and 
the restriction of 
$\tilde{f}^{n}$ to $\tilde S$ is an irrational rigid rotation.
Hence, by the same construction as that of Denjoy's counter-example, 
see \cite{D32} and \cite{Her79}, 
we have  
a $C^{1}$ diffeomorphism $g$ arbitrarily $C^{1}$-close to $\tilde{f}$, 
a sequence $\{ \ell_{i} \}_{i\geq 0}$ of open arcs 
which are contained in a circle $S_{g}$ sufficiently $C^{1}$-close to $\tilde S$ and  
satisfy the following conditions: 
\begin{itemize}
\item 
$S_{g}$ is an attracting invariant circle for $g^{n}$, and 
$g^{n}\vert_{S_{g}}$ is a rigid 
irrational rotation; 
\item for any $i, j\geq 0$ with $i\neq j$, 
\begin{equation}\label{wi}
g^{n}(\ell_{i})=\ell_{i+1},\quad  \ell_{i}\cap \ell_{j}=\emptyset;
\end{equation}
\item $\omega(\ell_{0}, g^{n})$ is a topologically transitive Cantor set on $S_{g}$ without periodic points, 
where $g^{n}$ has zero Lyapunov exponent.
\end{itemize}
%%%%%%%%%%%%%%%%%%%%%%%%%%%%%%%%%%%%%%%
\begin{figure}[hbt] 
\centering
\scalebox{0.8}{\includegraphics[clip]{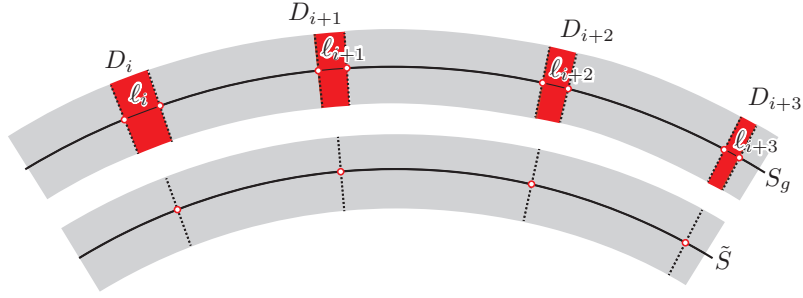}}
\caption{Denjoy-like construction}
\label{fg6}
\end{figure}
%%%%%%%%%%%%%%%%%%%%%%%%%%%%%%%%%%%%%%%

We here consider a normal tubular neighborhood of each arc $\ell_{i}$, see Figure \ref{fg6}, which is  defined as
$$D_{i}:=\bigcup_{x\in \ell_{i}} \Delta_{i}(x),$$ 
where $\Delta _i(x)$ is the open disk of radius $\delta$ centered at $x\in \ell _i$ which lies in a plane normal to $\ell _i$ for each $i\geq 0$, where $\delta >0$ is a given  small number independent of $x$ and $i$.
By the form of (\ref{nf}), the restrictions of $f_\mu^n(r,\theta,t)$ to the first and third entries 
are contracting maps.
It follows from this fact together with the wandering condition (\ref{wi}) 
that 
$$ g^{n}(D_{i})\subset D_{i+1},\quad 
D_i\cap D_j=\emptyset
$$ 
for every $i, j \geq 0$ with $i\neq j$. 
In consequence, the open set $D_{0}$ is a 
contracting  non-trivial wandering domain for $g^{n}$. 
Since $g^{i}(S_{g})\cap S_{g}=\emptyset$ for $i=1,\cdots, n-1$, 
$D_{0}$ is a contracting non-trivial wandering domain also for $g$.
\end{proof}
We are at last ready to show the main result of this paper.
\begin{proof}[Proof of Theorem \ref{thm1}]
By Propositions \ref{prop2.1}, \ref{thm3.1}, and Corollary \ref{coro3.3}, 
$$
\mathscr{A}\subset 
\overline{(\mathscr{B})}_{C^{1}}, \quad 
\mathscr{B}\subset \overline{(\mathscr{C})}_{C^{r}}\subset 
\overline{(\mathscr{D})}_{C^{r}}. 
$$
%$$
%\mathscr{A}\subset 
%\overline{(\mathscr{B})}_{C^{1}} \subset 
%\overline{(\mathscr{C})}_{C^{1}}\subset 
%\overline{(\mathscr{D})}_{C^{1}}. 
%$$
Hence, for any $f\in \mathscr{A}$, 
there is an  $\hat{f}\in \mathscr{D}$  arbitrarily $C^{1}$-close to $f$. 
By Proposition \ref{prop4.1}, 
we obtain a diffeomorphism $g$ arbitrarily $C^{1}$-close to $\hat{f}$ 
which has a contracting non-trivial wandering domain. 
It implies that $g$ is an element of $\mathscr{Z}$.
This completes the proof.
\end{proof}

\section*{Acknowledgements.}
This paper was partially supported by 
JSPS KAKENHI Grant Numbers 25400112 and 26400093.
The authors thank hospitality of the Kyoto Dynamical Seminar.
The authors also thank helpful comments and  suggestions 
by Alexandre Rodrigues and the anonymous referees.

%%%%%%%%%%%%%%%%%

\end{document}